\newcommand{\nwc}{\newcommand}
\nwc\eps{\varepsilon}
\newtheorem{theorem}{Theorem}[section]
\newtheorem{proposition}[theorem]{Proposition}
\newtheorem{lemma}[theorem]{Lemma}
\newtheorem{corollary}[theorem]{Corollary}
\theoremstyle{remark}
\newtheorem*{remarks}{Remarks}
\def\supp{\operatorname{{supp}}}
\providecommand{\abs}[1]{\left\lvert#1\right\rvert}
\numberwithin{equation}{section}
\begin{document}

\title[Controllability of Kawahara equation]{Approximation theorem for the Kawahara operator and its application in control theory}

%----------Author 1
\author[Capistrano-Filho]{Roberto de A.  Capistrano--Filho*}
\thanks{*Corresponding author.}
\address{
Departamento de Matem\'atica, Universidade Federal de Pernambuco\\
Cidade Universit\'aria, 50740-545, Recife (PE), Brazil\\
Email address: \normalfont\texttt{roberto.capistranofilho@ufpe.br}}

%\email{roberto.capistranofilho@ufpe.br}

%\thanks{This work was completed with the support of our
%\TeX-pert.}
%----------Author 2

\author[de Sousa]{Luan S. de Sousa}
\address{
Departamento de Matem\'atica, Universidade Federal de Pernambuco\\
Cidade Universit\'aria, 50740-545, Recife (PE), Brazil\\
Email address: \normalfont\texttt{luan.soares@ufpe.br}}
%\email{jose.quintero@correounivalle.edu.co}

%----------Author 4

\author[Gallego]{Fernando A. Gallego}
\address{
Departamento de Matemática, \\ Universidad Nacional de Colombia (UNAL), \\ Cra 27 No. 64-60, 170003, Manizales, Colombia\\
Email address: \normalfont\texttt{fagallegor@unal.edu.co}}

%\thanks{\textcolor{green}{Version: May  2023}}
\thanks{Capistrano--Filho was supported by CNPq grants numbers  307808/2021-1, 401003/2022-1 and 200386/2022-0, CAPES  grants numbers 88881.311964/2018-01 and 88881.520205/2020-01,  and MATHAMSUD 21-MATH-03. Gallego was partially supported by MATHAMSUD 21-MATH-03, Hermes Unal project nro 57774, and Hermes Unal 55686.}

\subjclass[2010]{Primary: 35Q53, 93B07, 93B05  Secondary: 37K10}
\keywords{Carleman estimate, approximation theorem, exact controllability, Kawahara equation, unbounded domain}

\begin{abstract}
Control properties of the Kawahara equation are considered when the equation is posed on an unbounded domain.  Precisely,  the paper's main results are related to an approximation theorem that ensures the exact (internal) controllability in $(0,+\infty)$. Following \cite{Rosier}, the problem is reduced to prove an approximate theorem which is achieved thanks to a global Carleman estimate for the Kawahara operator.
\end{abstract}
\maketitle

\section{Introduction}
\subsection{Problem set}
Our main focus in this work is to investigate the control property for the Kawahara equation \cite{Hasimoto1970,Kawahara}
\begin{equation}\label{fda1}
	u_{t}+u_{x}+u_{xxx}-u_{xxxxx}+uu_{x}=0
\end{equation}
which is a dispersive PDE describing numerous wave phenomena such as magneto-acoustic waves in a cold plasma \cite{Kakutani}, the propagation of long waves in a shallow liquid beneath an ice sheet \cite{Iguchi}, gravity waves on the surface of a heavy liquid \cite{Cui}, etc. In the literature, this equation is also referred to as the fifth-order KdV equation \cite{Boyd}, or singularly perturbed KdV equation \cite{Pomeau}.

Some valuable efforts in the last years focus on the analytical and numerical methods for solving \eqref{fda1}. These methods include the tanh-function method \cite{Berloff}, extended tanh-function method
\cite{Biswas}, sine-cosine method \cite{Yusufoglu}, Jacobi elliptic functions method \cite{Hunter}, direct algebraic method \cite{Polat}, decompositions methods \cite{Kaya}, as well as the variational iterations and homotopy
perturbations methods \cite{Jin}. 
%For more details see \cite{Bridges, Shuangping, Sirendaoreji, Wazwaz1, DZhang}, among others.  These approaches deal, as a rule, with soliton-like solutions obtained while one considers problems posed on a whole real line. 
%For numerical simulations, however, there appears the question of cutting off the spatial domain \cite{Bona1, Bona2}.

Due to this recent advance, previously mentioned, other issues for the study of the Kawahara equation appear. For example, we can cite the control problems, which is our motivation. Precisely, we are interested in proving control results for the Kawahara operator in an unbounded domain. It is well known that the first result with a ``kind" of controllability for the Kawahara equation
\begin{equation}\label{kaw} 
u_t+u_x+u_{xxx}-u_{xxxxx}=f(t,x), \quad (t,x)\in\mathbb{R}^{+}\times(0,\infty),
\end{equation}  
was proposed recently by the authors in \cite{CaSoGa}.  It is important to point out that in \cite{CaSoGa}, the authors are not able to prove that solutions of \eqref{kaw} satisfy the exact controllability property 
\begin{equation}\label{ex}
u(T,x)=u_{T} \quad x\in(0,\infty).
\end{equation}
Instead of this, they showed that solutions of the Kawahara equations satisfy an integral condition. 

To fill this gap in providing a study of the exact boundary controllability of \eqref{kaw} in an unbounded domain, this paper aims to present a way that may be seen as a first step in the knowledge of control theory for the system \eqref{kaw} on unbounded domains since the results proved in \cite{CaSoGa}, can not recover \eqref{ex}.  So, our aim in this manuscript is to present an answer to the following question:

\vspace{0.2cm}
\noindent \textbf{Problem $\mathcal{A}:$}\textit{Is there a solution to the system \eqref{kaw} satisfying \eqref{ex}? Or, equivalently, Is the solution of the system \eqref{kaw} exact controllable in the unbounded domain $(0,+\infty)$?}

\subsection{Historical background}
Stabilization and control problems on the bounded domain have been studied in recent years for the Kawahara equation. The first work concerning the stabilization property for the Kawahara equation in a bounded domain $(0, T) \times (0,L)$, is due to Capistrano--Filho \textit{et al.} in \cite{CaKawahara}.  In this article, the authors were able to introduce an internal feedback law and, considering general nonlinearity $u^pu_x$, $p\in [1,4)$, instead of $uu_x$, to show that under the effect of the damping mechanism the energy associated with the solutions of the system decays exponentially. 

Concerning the internal control problems we can cite as pioneer works the Zhang and Zhao articles \cite{zhang, zhang1}.  In both works the authors considered the Kawahara equation in a periodic domain $\mathbb{T}$ with a distributed control of the form \[ f(t,x)=(Gh)(t,x):= g(x)(  h(t,x)-\int_{\mathbb{T}}g(y) h(t,y) dy), \] where $g\in C^\infty (\mathbb T)$ supported in $\omega\subset\mathbb{T}$ and $h$ is a control input.  Still related to internal control issues, Chen \cite{MoChen} presented results considering the Kawahara equation posed on a bounded interval with a distributed control $f(t,x)$ and homogeneous boundary conditions. She showed the result by taking advantage of a Carleman estimate associated with the linear operator of the Kawahara equation with an internal observation. With this in hand, she was able to get a null controllable result when $f$ is effective in a $\omega\subset(0,L)$.  

As the results obtained by Chen in \cite{MoChen} do not answer all the issues of internal controllability, in a recent article \cite{CaGo} the authors closed some gaps left in \cite{MoChen}. Precisely, considering the Kawahara model with an internal control $f(t,x)$ and homogeneous boundary conditions, the authors can show that the equation in consideration is exactly controllable in $L^2$-weighted Sobolev spaces and, additionally, the Kawahara equation is controllable by regions on $L^2$-Sobolev space, for details see \cite{CaGo}.

Recently,  a new tool to find control properties for the Kawahara operator was proposed in \cite{CaSo, CaSoGa}.  First, in \cite{CaSo}, the authors showed a new type of controllability for a Kawahara equation, what they called the overdetermination control problem. Precisely, they can find a control acting at the boundary that guarantees that the solution of the problem under consideration satisfies an integral condition. In addition, when the control acts internally in the system, instead of the boundary, the authors proved that this condition is also satisfied. These problems give answers that were left open in \cite{CaGo} and present a new way to prove boundary and internal controllability results for the Kawahara operator.  After that, in \cite{CaSoGa}, the authors extend this idea for the internal control problem for the Kawahara equation on unbounded domains.  Precisely, under certain hypotheses over the initial and boundary data,  they can prove that an internal control input exists such that solutions of the Kawahara equation satisfy an integral overdetermination condition considering the Kawahara equation posed in the real line, left half-line, and right half-line.  

\subsection{Main results} With this background in hand, as mentioned before,  our main goal is to answer the Problem $\mathcal{A}$. To do that, we first prove two main results which are the key to giving some position of the controllability properties for the Kawahara operator on an unbounded domain.

Let us introduce some notations. For $L>0$ and $T>0$ let $Q_{T}=\{(x,t)\in
(-L,L)\times(0,T)\subset\mathbb{R}^{2}\},$ be a bounded rectangle.  From now on, for the sake of brevity, we shall write $P$ for the operator 
\begin{equation} \label{P}
P=\partial_t+\partial_x+\partial_x^3-\partial^5_x
\end{equation} 
with domain 
\begin{equation}\label{D_P}
\mathcal{D}(P)=L^2(0,T;H^5(-L,L)\cap H_0^2(-L,L))\cap H^1(0,T;L^2(-L,L)).
\end{equation} 

Our first result is related to a Carleman estimate for the Kawahara operator being precise, for $f\in L^2(0,T;L^2(-L,L))$ and $q_0\in L^2(-L,L),$ the operator $P q=f$, where $P$ is defined by \eqref{P} with domain \eqref{D_P}.  So, the first result is devoted to proving a global Carleman estimate.
\begin{theorem}\label{main1} There exist constants $s_0=s_{0}(L,T)>0$ and $\tilde{C}= \tilde{C}(L,T)>$ such that for any $q\in\mathcal{D}(P)$ and all $s \geq s_{0}$, one has
\begin{equation}\label{DGC}
\begin{split}
\int_{0}^{T}\int_{-L}^{L}\left\{(s\varphi)^9|q|^2+(s\varphi)^{7}|q_x|^2 + (s\varphi)^{5}|q_{xx}|^{2}+(s\varphi)^{3}|q_{xxx}|^{2}+ s\varphi|q_{xxxx}|^{2}\right\}e^{-2s\varphi}dxdt \\ \leq C\int_{0}^{T}\int_{-L}^{L}|f|^{2}e^{-2s\varphi}dxdt.
\end{split}
\end{equation} 
\end{theorem}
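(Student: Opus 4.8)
The plan is to follow the classical conjugation-and-estimate strategy adapted to the fifth-order Kawahara operator, working on the bounded rectangle $Q_T$. First I would fix the weight: choose a function $\psi\in C^\infty([-L,L])$ that is positive, has a single nondegenerate critical point (so $\psi'$ vanishes only at one interior point, or alternatively $\psi'$ vanishes nowhere on $[-L,L]$ depending on the boundary-condition bookkeeping one wants), and whose normal derivatives have the correct sign at $x=\pm L$; then set $\varphi(x,t)=\dfrac{\psi(x)}{t(T-t)}$ so that $\varphi\to+\infty$ as $t\to 0^+$ or $t\to T^-$, which kills all boundary terms in $t$ and removes the need for compatibility/initial data. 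One records the standard derivative bounds $|\partial_t\varphi|\le C T\varphi^2$, $|\partial_t^2\varphi|\le CT^2\varphi^3$, and $|\partial_x\varphi|\le C\varphi$, $|\partial_x^k\varphi|\le C_k\varphi$, which are what make the lower-order commutator terms absorbable for $s$ large.

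Next I would perform the change of variables $w=e^{-s\varphi}q$, so that $q\in\mathcal D(P)$ translates into homogeneous boundary conditions $w=w_x=0$ at $x=\pm L$ (from $q\in H_0^2$) and $w$ vanishing at $t=0,T$ (from the weight), and rewrite $e^{-s\varphi}Pq=e^{-s\varphi}f$ as $P_s w = e^{-s\varphi} f$ where $P_s = e^{-s\varphi}P(e^{s\varphi}\cdot)$. Expanding, $P_s w$ splits into a "self-adjoint-like" part $P_1 w$ and a "skew-like" part $P_2 w$ plus a remainder $R w$ of lower order in $s\varphi$; the precise split is chosen so that $\|P_1 w\|^2 + \|P_2 w\|^2 + 2(P_1 w, P_2 w)_{L^2(Q_T)} = \|P_s w - Rw\|^2$, and the cross term $(P_1 w, P_2 w)$ is the heart of the estimate. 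The dominant terms come from the fifth-order operator $-\partial_x^5$: here $P_1$ contains $-\partial_x^5 w - 10 s^2\varphi^2(\psi')^2 \partial_x^3 w - \dots - s^5\varphi^5(\psi')^5 w$-type terms and $P_2$ contains the odd-in-$s$ pieces like $-5 s\varphi\psi'\partial_x^4 w - \dots$ (the exact combinatorics of the binomial expansion of $e^{-s\varphi}\partial_x^5 e^{s\varphi}$ must be written out). Integrating $(P_1 w, P_2 w)$ by parts in $x$ produces, after all boundary terms vanish, a positive quadratic form
\begin{equation*}
\int_0^T\!\!\int_{-L}^{L}\Bigl\{ c_1 (s\varphi)^9 |w|^2 + c_2(s\varphi)^7|w_x|^2 + c_3(s\varphi)^5|w_{xx}|^2 + c_4(s\varphi)^3|w_{xxx}|^2 + c_5 s\varphi|w_{xxxx}|^2\Bigr\}\,dx\,dt
\end{equation*}
with positive constants $c_i$ depending on $\inf|\psi'|$ (and the lower-order contributions of $\partial_x$, $\partial_x^3$, $\partial_t$ produce only terms of strictly lower order in $s\varphi$, hence absorbable). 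The remaining terms $\|R w\|^2$ and the extra cross-terms are bounded by $C\int (s\varphi)^{k}|\partial_x^j w|^2$ with the power of $s\varphi$ strictly below the leading one for the matching $j$, so for $s\ge s_0(L,T)$ large they are absorbed into the left-hand side, yielding the estimate for $w$. Finally I would undo the substitution $w=e^{-s\varphi}q$: by the product rule $\partial_x^j w = e^{-s\varphi}\sum_{i\le j}\binom{j}{i}(\partial_x^{j-i}e^{-s\varphi})e^{s\varphi}\,\partial_x^i q$, so each $|\partial_x^j q|^2 e^{-2s\varphi}$ is controlled by $\sum_{i\le j}(s\varphi)^{2(j-i)}|\partial_x^i w|^2$ up to constants, and conversely; reassembling gives \eqref{DGC}.

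The main obstacle I anticipate is twofold. The first is purely computational but delicate: correctly expanding $e^{-s\varphi}\partial_x^5(e^{s\varphi}w)$, choosing the $P_1/P_2$ split so that every boundary term in the integration by parts of $(P_1w,P_2w)$ either vanishes by $w=w_x=0$ at $x=\pm L$ and $w=0$ at $t=0,T$, \emph{or} has a favorable sign — in a fifth-order problem one typically only has two Dirichlet-type conditions at each endpoint, so the second- and higher-order traces of $w$ at $x=\pm L$ are \emph{not} zero, and one must check that the corresponding boundary integrals carry the right sign coming from $\partial_x\psi(\pm L)$; this is exactly where the sign condition on the normal derivative of $\psi$ is used, and it may force the more refined choice of $\psi$ (or an auxiliary cut-off). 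The second obstacle is bookkeeping the absorption: one must verify that the leading coefficient $c_1$ (and $c_2,\dots,c_5$) is strictly positive after collecting \emph{all} contributions — including cross-interactions between the $-\partial_x^5$ part and the $\partial_x^3$, $\partial_x$, $\partial_t$ parts — and that $s_0$ can indeed be chosen uniformly depending only on $L,T$. Once the signs and the positivity of the leading form are secured, the rest is a routine (if lengthy) absorption argument.
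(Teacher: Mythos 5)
Your proposal follows essentially the same route as the paper: conjugate with $e^{-s\varphi}$, $\varphi=\psi(x)/\bigl(t(T-t)\bigr)$, split the conjugated operator into two pieces $L_1,L_2$, expand the cross term $2(L_1u,L_2u)$ by integration by parts, and use the pseudoconvexity conditions $\psi''<0$, $|\psi'|>0$ to extract positive leading coefficients of orders $(s\varphi)^9,(s\varphi)^7,\dots,s\varphi$ while absorbing the lower-order remainders for $s\ge s_0(L,T)$. The one point you flag as delicate — the non-vanishing higher-order traces of $w$ at $x=\pm L$ in a fifth-order integration by parts — is precisely the step the paper passes over silently, so your caution there is well placed.
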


As a consequence of the previous Carleman estimate,  the second main result of the manuscript gives us an approximation theorem, which is the key point to prove the exact controllability for the operator $P$ posed on unbounded domain and, in this case,  to answer the Problem $\mathcal{A}$.

\begin{theorem}\label{main2} Let  $n \in \mathbb{N}\backslash \{0,1\},$ and $t_{1}, t_{2}$ and $T$ real number such that $0 < t_{1} < t_{2} < T.$ Let us consider $ u \in L^{2}((0,T)\times(-n,n))$ such that $$Pu=0\quad \text{in}\quad (0,T)\times(-n,n),$$ with $\supp \ u \subset [t_{1}, t_{2}]\times(-n,n)$. Let $0 < \epsilon < min(t_{1}, T - t_{2}),$ then there exists $v \in L^{2}((0,T)\times(-n-1,n+1))$ satisfying
\begin{equation}\label{4.17}
Pv= 0 \ \mbox{in} \ (0,T)\times(-n-1, n+1),
\end{equation} 
\begin{equation}\label{4.18}
\supp \ v \subset [t_{1} - \epsilon, t_{2} + \epsilon] \times (-n-1,n+1),
\end{equation} 
 and
\begin{equation}\label{4.19}
\|v - u\|_{L^{2}((0,T)\times(-n+1,n-1))} < \epsilon.
\end{equation}
\end{theorem}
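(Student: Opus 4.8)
The plan is to extend the solution $u$ from the slab $(0,T)\times(-n,n)$ to the wider slab $(0,T)\times(-n-1,n+1)$ by a duality/Hahn--Banach argument whose feasibility is guaranteed by the Carleman estimate of Theorem~\ref{main1}. The key observation is that extending $u$ is equivalent to solving, in a weak sense, the equation $Pv=0$ on the larger cylinder with prescribed ``boundary data'' near $x=\pm n$ read off from $u$, while keeping $v$ supported in time inside $[t_1-\epsilon,t_2+\epsilon]$ and close to $u$ in the interior strip $(-n+1,n-1)$.

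First I would set up the right functional-analytic framework. Fix a cutoff $\chi(t)$ equal to $1$ on $[t_1,t_2]$ and supported in $[t_1-\epsilon,t_2+\epsilon]$, so that any candidate $v$ we produce automatically satisfies \eqref{4.18} after multiplication by $\chi$ (using that $Pu=0$ forces compatibility at the temporal endpoints since $u$ already has support in $[t_1,t_2]$). Then I would consider the subspace $\mathcal{Y}\subset L^2((0,T)\times(-n-1,n+1))$ of functions of the form $P^*\psi$ for $\psi$ ranging over a suitable class of smooth test functions vanishing to high order on the lateral boundary $\{x=\pm(n+1)\}$, where $P^*=-\partial_t-\partial_x-\partial_x^3+\partial_x^5$. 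On $\mathcal{Y}$ define the linear functional $\ell(P^*\psi)=\int_0^T\int_{-n}^{n} u\,P^*\psi\,dx\,dt$. The Carleman estimate \eqref{DGC}, applied to $\psi$ on the cylinder $(-n-1,n+1)$ with the weight $\varphi$ (which is bounded above and below by positive constants on any fixed compact domain, and whose precise form localizes the observation), shows that $\|\psi\|$ in the weighted norm on the left of \eqref{DGC} is controlled by $\|P^*\psi\|_{L^2}$ restricted to a subregion; choosing the Carleman weight so that this subregion lies inside $(-n,n)$ (shrinking the observation region to the already-available data) makes $\ell$ bounded on $\mathcal{Y}$ with respect to $\|P^*\psi\|_{L^2((0,T)\times(-n,n))}$. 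By Hahn--Banach and the Riesz representation theorem, $\ell$ extends to a bounded functional, i.e. there is $v\in L^2((0,T)\times(-n-1,n+1))$ with $\int v\,P^*\psi = \ell(P^*\psi)$ for all admissible $\psi$; testing against $\psi$ supported away from $x=\pm n$ gives $Pv=0$ distributionally, i.e. \eqref{4.17}, and testing against $\psi$ supported in $(-n,n)$ forces $v=u$ there, which in particular gives \eqref{4.19} with room to spare (indeed equality on $(-n+1,n-1)$). The support condition \eqref{4.18} follows because $u$ vanishes for $t\notin[t_1,t_2]$, so $\ell$ kills test functions supported in time outside $[t_1-\epsilon,t_2+\epsilon]$; a standard unique-continuation-in-time argument, or direct use of the transport-like structure of $P$ together with the Carleman estimate, propagates this to $v$.

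The main obstacle I expect is the careful choice and placement of the Carleman weight $\varphi$ on the enlarged domain $(-n-1,n+1)$: one needs the ``good'' region where \eqref{DGC} gives full control of $\psi$ to sit strictly inside $(-n,n)$, so that the only information about $u$ entering the estimate is information we actually possess, while simultaneously the weight must degenerate appropriately near $x=\pm(n+1)$ to accommodate test functions that need not vanish to infinite order there. Equating this with the boundary conditions built into $\mathcal D(P)$ (namely $H^2_0$ in space) and checking that the duality pairing is well defined — i.e. that the lateral boundary terms generated when integrating $P^*\psi$ against $u$ vanish — is the delicate bookkeeping step. A secondary technical point is upgrading the weak solution $v\in L^2$ to something regular enough that the support statement \eqref{4.18} and the interior comparison \eqref{4.19} make literal sense; this should follow from interior parabolic-type smoothing for the (fifth-order, dispersive) operator $P$ applied locally, together with the already-established equality $v=u$ on $(-n,n)$.
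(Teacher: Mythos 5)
Your approach has a genuine gap: the duality scheme you set up would, if it ran, produce an \emph{exact} extension of $u$, and such an extension does not exist in general --- that is precisely why the statement is an approximation theorem. Concretely, if you obtain $v$ on the large cylinder with $\int v\,P^{*}\psi=\int_{(-n,n)} u\,P^{*}\psi$ for all test functions $\psi$ on $(0,T)\times(-n-1,n+1)$, then $Pv=P\tilde u$ in $\mathcal D'$, where $\tilde u$ denotes the extension of $u$ by zero; since $u$ and its first four $x$-derivatives have no reason to vanish at $x=\pm n$, the distribution $P\tilde u$ contains layers supported on $\{x=\pm n\}$, so $Pv\neq 0$ across these lines. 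Testing only against $\psi$ supported away from $x=\pm n$ gives $Pv=0$ only off those lines, and testing against $\psi$ supported in $(-n,n)$ gives $P(v-u)=0$ there, not $v=u$. Your parenthetical claim that one gets equality $v=u$ on $(-n+1,n-1)$ ``with room to spare'' is the clearest symptom of the problem: the theorem is a Runge-type statement, and exact continuation across $x=\pm n$ while keeping $Pv=0$ on the larger cylinder is in general impossible.

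The paper's proof is built from three ingredients that your proposal does not supply. First, a density lemma (Lemma \ref{4.2}): solutions on the larger cylinder which, near the temporal endpoints, coincide with orbits of the periodic group $S_{L}$ are $L^{2}$-dense, on the inner spatial strip, in the set of compactly-in-time supported solutions on the smaller cylinder; this is proved by a Hahn--Banach duality argument, but the decisive step identifying the annihilator uses \emph{Holmgren's uniqueness theorem}, which is absent from your argument and cannot be replaced by mere boundedness of a functional. Second, after multiplying the approximant by a temporal cutoff, the resulting error $P\overline v$ is estimated by an \emph{observability inequality} (Proposition \ref{OI_I}, proved via Ingham's inequality for the eigenvalues $\lambda_{n}$ of the periodic Kawahara operator); this is what converts control of $\tilde v$ on the inner strip into control on the full spatial interval during the cutoff transition. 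Third, the Carleman estimate \eqref{DGC} enters only through Proposition \ref{3.2}, which solves $P\omega=P\overline v$ with $\omega$ supported in $[t_{1}-\epsilon,t_{2}+\epsilon]$ and with $\|\omega\|$ controlled by $\|P\overline v\|$; setting $v=\overline v-\omega$ then yields \eqref{4.17}, \eqref{4.18} and \eqref{4.19}. Your plan uses the Carleman estimate for a different (and unachievable) purpose and omits the density and observability steps entirely, so it does not close.
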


Finally, the previous result helps to show the third main result of the manuscript, giving a positive answer for the exact controllability problem.

\begin{theorem}\label{main3}
Given $T, \epsilon$ an $s$ real numbers with $0 < \epsilon < \frac{T}{2}$ and  $s \in\left(-\frac{7}{4}, \frac{5}{2}\right) \backslash\left\{\frac{1}{2}, \frac{3}{2}\right\}$.  Let $u_{0}, u_{T} \in H^{s}(0,+\infty)$, thus, there exists a function 
\begin{equation*}
u \in L^{2}_{\mbox{loc}}([0,T]\times(0,+\infty)) \cap C([0,\epsilon];H^{s}(0,+\infty)) \cap C([T-\epsilon,T];H^{s}(0,+\infty)
\end{equation*}
solution of 
\begin{equation}\label{PP}
\begin{cases}
u_{t} +  u_{x} +  u_{xxx} - u_{xxxxx}= 0 & \mbox{in} \ \mathcal{D}'((0,T)\times(0,+\infty)), \\ 
u(0,x)= u_{0} & \mbox{in} \ (0,+\infty), 
\end{cases}
\end{equation} 
satisfying  $u(T,x) = u_{T}$  in $(0,+\infty).$
\end{theorem}

%As a consequence of the previous result, the following exact internal control problem hods.
%
%\begin{corollary}\label{main4} Under the hypothesis of Theorem \ref{main3}, there exists a control function  \textcolor{red}{$h \in L^{2}(0,T;H^{s-1}(0,+\infty))$} and $u(x,t)$ in the class \eqref{u} solution of 
%\begin{equation}\label{PP_1}
%\begin{cases}
%u_{t} +  u_{x} +  u_{xxx} - u_{xxxxx}+uu_x= h(x,t) & \mbox{in} \ \mathcal{D}'((0,T)\times(0,+\infty)), \\ 
%u(0,x)= u_{0} & \mbox{in} \ (0,+\infty), 
%\end{cases}
%\end{equation} 
%such that  $u(T,x) = u_{T}$  in $(0,+\infty).$
%\end{corollary}

\subsection{Final comments and paper's outline}
The results in this manuscript gave a necessary first step to the improvement of the control properties for the Kawahara operator.  Let us comment on this in the following remark.
\begin{remarks} The following remarks are worth mentioning:
\begin{itemize}
\item[i.] From our knowledge, our results are the first ones for the Kawahara operator posed on an unbounded domain. 
\item[ii.]Note that the Carleman estimate proved in \cite{MoChen} is local which differs from the Carleman estimates shown in Theorem \ref{main1}.    
\item[iii.] This work is the first one to prove an approximation theorem, that is, Theorem \ref{main2}, for the Kawahara operator \eqref{P}.
\item[iv.] In the context of the Kawahara operator, there is one work \cite{CaSoGa} which is limited from a control point of view since the solutions satisfy an integral condition instead of  \eqref{ex}. Thus, Theorem \ref{main3} provides progress in the control theory for this operator in an unbounded domain thanks to the fact that solutions of \eqref{PP} satisfy the exact controllability condition  \eqref{ex}.
\item[v.] It is important to point out that the strategy applied in our work was already applied for the Korteweg--de Vries (KdV) equation \cite{Rosier} and the KdV-Burgers equation \cite{Gallego}. In both cases, a Carleman estimate is derived following Fursikov–Imanuvilov’s approach \cite{FI}.
\item[vi.] The Kawahara equation \eqref{PP} is a higher-order KdV equation, here called the Kawahara equation or fifth-order KdV equation. So, for this operator, some extra difficulties appear. The first main difficulty is to prove a Carleman estimate. Note that we can not directly apply the estimates proposed in \cite[Proposition 3.1]{Rosier} or \cite[Lemma 2.4]{Gallego}, since we have a fifth-order equation and more terms (included traces) need to be controlled (see Section \ref{Sec2}).
\item[vii.] Concerning the exact controllability result, Theorem \ref{main3}, note that the restriction in $s$ for the space $H^s$ is required, this is because the well-posedness on an unbounded domain for the system \eqref{PP} follows if $s \in\left(-\frac{7}{4}, \frac{5}{2}\right) \backslash\left\{\frac{1}{2}, \frac{3}{2}\right\}$, which not happens in \cite{Rosier,Gallego}. On the other hand, since we have a more strong well-posedness solution borrowed from \cite{MC}, we do not need the $L^2$ space with weight as in \cite[Theorem 1.3]{Rosier} and \cite[Theorem 1.2]{Gallego}, for example.
\item[viii.] Summarizing, our result gives new results for the Kawahara operator (higher-order KdV equation) in the following sense:
\begin{enumerate}
\item  New global Carleman estimate;
\item Approximation theorem;
\item Exact controllability in $H^s$, when  $s \in\left(-\frac{7}{4}, \frac{5}{2}\right) \backslash\left\{\frac{1}{2}, \frac{3}{2}\right\}$.
\end{enumerate}
\end{itemize}
\end{remarks}

The remainder of the paper is organized as follows. In Section \ref{Sec1}, we present auxiliary results which are paramount to show the main results of the article.  In Section \ref{Sec2}, we present the global Carleman estimate, that is, we will show Theorem \ref{main1}. Section \ref{Sec3} is devoted to giving applications of the Carleman estimate, precisely,  we will provide an approximation Theorem \ref{main2}. Finally,  in Section \ref{Sec4},  we will answer the Problem $\mathcal{A}$ using the approximation theorem, i.e.,  we present the proof of Theorem \ref{main3}. 

\section{Preliminaries} \label{Sec1}
\subsection{Auxiliary lemma}%\label{Ap_A} 
In this subsection, we will prove an auxiliary result that will put us in a position to apply it to prove the main results of the article. For this propose, observe that the operator $P$ generates a $C_0$--semigroup of contractions ${S_{L}(t)}_{t\geq0}$ on $L^2(-L, L)$ (see for instance \cite{CaKawahara}) which be denoted now on by $S_{L}(\cdot)$. With this in hand, the next lemma holds.
\begin{lemma}\label{4.2} Consider $l_{1}, l_{2}, L, t_{1}, t_{2}$ and $T$ be number such that $0 < l_{1} < l_{2} < L $ and $0 < t_{1} < t_{2} < T$.  Let $ u \in L^{2}((0,T)\times(-l_{2}, l_{2}))$ be such that 
\begin{equation*}
Pu= 0 \ \mbox{in} \ (0,T)\times(-l_{2}, l_{2}) \quad \mbox{and} \quad \supp \ u \subset [t_{1}, t_{2}]\times(-l_{2}, l_{2}).
\end{equation*}
Let $\eta > 0$ and $\delta > 0$, with $2\delta < \min(t_{1}, T - t_{2})$ be given. Then there exist $v_{1}, v_{2} \in L^{2}(-L,L)$ and $v \in L^{2}((0,T)\times(-L, L))$ such that
\begin{equation}\label{4.6}
Pv= 0 \ \mbox{in} \ (0,T)\times(-L,L),
\end{equation}
\begin{equation}\label{4.7}
v(t,\cdot)= S_{L}(t - t_{1} + 2\delta)v_{1}, \ \mbox{for} \ t_{1} - 2\delta < t < t_{1} - \delta,
\end{equation}
\begin{equation}\label{4.8}
v(t,\cdot)= S_{L}(t - t_{2} + \delta)v_{2}, \ \mbox{for} \ t_{2} + \delta < t < t_{2} + 2\delta
\end{equation}
and
\begin{equation*}
\|v - u\|_{L^{2}((t_{1} - 2\delta, t_{2} + 2\delta)\times(-l_{1}, l_{1}))} < \eta.
\end{equation*}
\end{lemma}
\begin{proof}
Remember that $Q_T=(0, T)\times(-L,L)$, $P$ is defined by \eqref{P}-\eqref{D_P} and pick $Q_{\delta}= (t_{1} - 2\delta, t_{2} + 2\delta)\times(-l_{1}, l_{1}).$ By a smoothing process via convolution and multiplying the regularized function by a cut-off function of $x$, we have a function $u' \in \mathcal{D}(\mathbb{R}^{2}),$ such that
\begin{equation}\label{4.10}
\begin{cases}
\supp \ u' \subset [t_{1} - \delta, t_{2} - \delta] \times [-l_{2}, l_{2}], \\ 
Pu'= 0 \ \mbox{in} \ (0,T)\times(-l_{1}, l_{1}),  \quad \text{and}\\
\|u' - u\|_{L^{2}((0,T)\times(-l_{1}, l_{1}))} < \frac{\eta}{2}.
\end{cases}
\end{equation} 
Consider the following set 
$$\mathcal{E}= \{v \in L^{2}(Q_T); \exists \ v_{1}, v_{2} \in L^{2}(-L,L) \ \mbox{such that} \ \eqref{4.6}, \eqref{4.7}\ \mbox{and} \ \eqref{4.8} \ \mbox{hold true} \}.$$
Note that this lemma is proved if we may find $v\in\mathcal{E}$ such that 
$$\|v - u'\|_{L^{2}(Q_{\delta})} < \frac{\eta}{2}.$$
It follows by the following trivial inequality 
\begin{equation*}
\begin{split}
\|v - u\|_{L^{2}(Q_{\delta})} \leq& \|v - u'\|_{L^{2}(Q_{\delta})} + \|u' - u\|_{L^{2}(Q_{\delta})}\\
<&\|v - u'\|_{L^{2}(Q_{\delta})} + \frac{\eta}{2}.
\end{split}
\end{equation*}
So we achieve the proof if we prove that $u' \in \overline{\mathcal{E}}= (\mathcal{E}^{\perp})^{\perp},$ where the closure and the orthogonal complement
are taken in the space $L^{2}(Q_{\delta}).$ For a fix function $g \in \mathcal{E}^{\perp} \subset L^{2}(Q_{\delta})$ we should prove that the following holds
\begin{equation}\label{main}
(u',g)_{L^{2}(Q_{\delta})}= 0.
\end{equation}
Before presenting the proof of \eqref{main}, we claim the following. 

\vspace{0.2cm}
\noindent\textbf{Claim 1.} Let $\mathcal{T}= \{ \varphi \in C^{\infty}(\mathbb{R}^{2}); \supp \ \varphi \subset [t_{1} - \delta, t_{2} + \delta]\times{\mathbb{R}}\}.$ So, there exists $C > 0$ such that
\begin{equation}\label{4.11}
|(\varphi, g)_{L^{2}(Q_{\delta})}| \leq C\|P\varphi\|_{L^{2}(Q_T)},
\end{equation}
for all $ \varphi \in \mathcal{T}.$
\vspace{0.2cm}

In fact,  pick $\varphi \in \mathcal{T}$ and define 
$$\psi(t)= \int_{0}^{t}S_{L}(t - s)P \varphi(s)ds, $$
for $0\leq t \leq T$,  that is, $\psi$ is strong solution of the boundary initial-value problem 
\begin{equation*}
\begin{cases}
P\psi=0,&\text{in }Q_{T}\text{,}\\
\psi(t,-L)=\psi(t,L), \quad \psi_{x}(t,-L)= \psi_{x}(t,L), \quad \psi_{xx}(t,-L)= \psi_{xx}(t,L), & t\in[0,T], \\\psi_{xxx}(t,-L)= \psi_{xxx}(t,L), \quad \psi_{xxxx}(t,-L)= \psi_{xxxx}(t,L), & t\in[0,T], \\
\psi(0,\cdot)= 0,&\text{in }[-L,L].
\end{cases}
\end{equation*}
Thanks to this fact,  $v= \psi - \varphi\in\mathcal{E}$, observe that \eqref{4.7} and \eqref{4.8} is verified with $v_1=0$ and $v_2=\psi(t_2+\delta)$, hence 
$$(v, g)_{L^{2}(Q_{\delta})}= (\psi -\varphi, g)_{L^{2}(Q_{\delta})}= 0.$$
On the other hand,  we have 
\begin{equation*}
\|\psi(t)\|_{L^{2}(-L,L)} \leq \|P\varphi\|_{L^{1}(0,t; L^{2}(-L,L))} \leq \sqrt{T}\|P\varphi\|_{L^{2}(Q_T))},
\end{equation*}
for all $t\in[0,T]$, and therefore 
\begin{equation*}
|(\varphi,g)_{L^{2}(Q_{\delta})}|= |(\psi,g)_{L^{2}(Q_{\delta})}| \leq T\|g\|_{L^{2}(Q_{\delta})}\|P\varphi\|_{L^{2}(Q_T)},
\end{equation*}
showing Claim 1.  We also need the following claim.

\vspace{0.2cm}
\noindent\textbf{Claim 2.}  There exists a function $\omega \in L^{2}(Q_T)$ such that 
\begin{equation}\label{4.12}
(\varphi, g)_{L^{2}(Q_{\delta})}= (P\varphi, \omega)_{L^{2}(Q_T)}, 
\end{equation}
for all $ \varphi \in \mathcal{T}.$
\vspace{0.2cm}

Indeed,  let $\mathcal{Z}= \{(P\varphi)\bigr|_{Q}; \varphi \in \mathcal{T}\}$ and define the map $ \Lambda:\mathcal{Z} \longrightarrow \mathbb{R}$ by 
$$ \Lambda(\zeta)= (\zeta, g)_{L^{2}(Q_{\delta})}.$$
First, note that for any  $\zeta \in \mathcal{Z},$ if $\zeta= (P\varphi_{1})\bigr|_{Q_T}= (P\varphi_{2})\bigr|_{Q_T}$,  for two functions $\varphi_{1}, \varphi_{2} \in \mathcal{T},$ we have using claim 1 that $\varphi_{1}-\varphi_{2} \in \mathcal{E},$ hence $(\varphi_{1}-\varphi_{2}, g)_{L^{2}(Q_{\delta})}= 0.$ Thus,  $\Lambda$ is well defined.   Consider $H$ the closure of $\mathcal{Z}$ in $L^{2}(Q).$ Due to \eqref{4.11},  using the Hahn-Banach theorem, we may extend $\Lambda$ to $H$ in such way that $\Lambda$ is a continuous linear form on $H$. Thus, it follows from Riesz representation theorem that  there exists $\omega \in H$ such that 
$$\Lambda(\zeta)= (\zeta, \omega)_{L^{2}(Q_T)}, \ \forall \zeta \in H,$$
and so \eqref{4.12} follows,  and the proof of Claim 2 is finished. 

Finally, let us prove \eqref{main}. To do it, consider the extensions of  $g$ and $\omega$ in $\mathbb{R}^{2}$ given by
$$\tilde{g}(t,x)=0, \ \mbox{for} \ (t,x) \in \mathbb{R}^{2}\backslash Q_{\delta}$$
and
$$\tilde{\omega}(t,x)=0, \ \mbox{for} \ (t,x) \in \mathbb{R}^{2}\backslash Q_T,$$
respectively.  Taking $\Omega = (t_{1} - \delta, t_{2} - \delta)\times\mathbb{R}$,  let  $\varphi \in \mathcal{D}(\Omega) \subset \mathcal{T}.$ So, we have that 
$$(\varphi, g)_{L^{2}(Q_{\delta})}= (\varphi,\tilde{g})_{L^{2}(\Omega)} \quad \mbox{and} \quad (P\varphi, \omega)_{L^{2}(Q_T)}= (P\varphi, \tilde{\omega})_{L^{2}(\Omega)},$$
therefore,  using \eqref{4.12},  we get
\begin{equation*}
\langle P^{*}(\tilde{\omega}), \varphi \rangle_{\mathcal{D}'(\Omega), \mathcal{D}(\Omega)}= \langle\tilde{g}, \varphi\rangle_{\mathcal{D}'(\Omega), \mathcal{D}(\Omega)},
\end{equation*}
so  $P^{*}(\tilde{\omega})= \tilde{g}$ in $\mathcal{D}'(\Omega)$ and
$$P^{*}(\tilde{\omega})= 0, \ \mbox{for} \ t_{1} - \delta < t < t_{2} + \delta \ \mbox{and} \ |x| > l_{1}.$$
Since 
$$\tilde{\omega}(t,x)=0, \ \mbox{for} \ t_{1}- \delta < t < t_{2} - \delta \ \mbox{and} \ |x| > L,$$
Holmgren's uniqueness theorem (see e. g. \cite[Theorem 8.6.8]{Horm}) ensures that 
$$\tilde{\omega}(t,x)= 0, \ \mbox{for} \ t_{1} - \delta < t < t_{2} + \delta \ \mbox{and} \ |x| > l_{1}.$$
Lastly, due to \eqref{4.12} and \eqref{4.10},  we conclude that
$$(u',g)_{L^{2}(Q_{\delta})}= (Pu', \omega)_{L^{2}(Q)}=(Pu', \omega)_{L^{2}((t_{1} - \delta, t_{2} + \delta)\times(-l_{1}, l_{1}))}=0,$$
finishing the proof.
\end{proof}

\subsection{Observability inequality \textit{via} Ingham inequality} Given a family $\Omega=(\omega_k)_{k\in K}:=\{\omega_k : k\in K\}$ of real numbers, we consider functions of the form $
\sum_{k\in K}c_ke^{i\omega_kt}$ with square summable complex coefficients $(c_k)_{k\in K}:=\{c_k : k\in K\}$, and we investigate the relationship between the quantities
\begin{equation*}
\int_I\abs{\sum_{k\in K}c_ke^{i\omega_kt}}^2\ dt
\quad \text{and} \quad
\sum_{k\in K}\abs{c_k}^2,
\end{equation*}
where $I$ is some given bounded interval. In this work, the following version of the Ingham-type theorem will be used.

\begin{theorem}\label{tgeneralizedingham}
 Let $\{\lambda_{k}\}$ be a family of real numbers,
satisfying the uniform gap condition
\begin{equation*}
\gamma = \inf_{k \neq n} | \lambda_{k} - \lambda_{n}| > 0
\end{equation*}
and set
\begin{equation*}
\gamma' = \sup_{A \subset K} \inf_{k, n \in K \setminus A} |\lambda_{k} -\lambda_{n}| > 0
\end{equation*}
where $A$ rums over the finite subsets of $K$. If I is a bounded interval of length $|I| \geq \frac{2 \pi}{\gamma'}$, then there exist positive constants $A$ and $B$ such that
\begin{equation*}
A\sum_{k \in K} |c_{k}|^{2} \leq \int_{I} |f(t)|^{2} dt \leq B \sum_{k \in K} |c_{k}|^{2}
\end{equation*}
for all functions given by the sum $f(t) = \sum_{ k \in K} c_{k}e^{i\lambda_{k}t}$ with square-summable complex coefficients $c_{k}$.
\end{theorem}
\begin{proof}
See \cite[Theorem 4.6]{KomLor2005}.
\end{proof}

%We recall the definition of the \emph{upper density} $D^+=D^+(\Omega)$ of $\Omega$.
%For each $\ell>0$ we denote by $n^+(\ell)$ the largest number of exponents $\omega_k$ that we may find in an interval of length $\ell$, and then we set
%\begin{equation*}
%D^+:=\inf_{\ell>0}\frac{n^+(\ell)}{\ell}\in[0,\infty].
%\end{equation*}
%It can be shown (see, e.g., \cite[p. 57]{BaiKomLor111} or \cite[p. 174]{KomLor2005}) that
%\begin{equation*}
%D^+=\lim_{\ell\to\infty}\frac{n^+(\ell)}{\ell}.
%\end{equation*}
%It follows from the definition that $D^+$ is subadditive:
%\begin{equation*}
%D^+(\Omega_1\cup\Omega_2)\le D^+(\Omega_1)+D^+(\Omega_2)
%\end{equation*}
%for any families $\Omega_1$ and $\Omega_2$.
%If $\Omega$ is \emph{uniformly separated}, i.e., if
%\begin{equation*}
%\gamma = \gamma(\Omega)=\inf  \{ | \omega_k - \omega_n| \ : \ k \neq n\}>0,
%\end{equation*}
%then $D^+\le 1/\gamma$, and hence $D^+<\infty$.

Now on, consider the operator $A: D(A) \subset L^{2}(-L,L) \longrightarrow L^{2}(-L,L),$ defined by $A(u)= - u_{x} -  u_{xxx} + u_{xxxxx}$, with  $$D(A)=\{v \in H^{5}(-L,L); v(-L)= v(L), v_{x}(-L)= v_{x}(L),..., v_{xxxx}(-L)= v_{xxxxx}(L) \}.$$ In what follows $S_{L}$ will denote the unitary group in $L^{2}(-L,L)$ generated by the operator $A$, using Stone theorem.   With this in hand, pick $e_{n}= \frac{1}{\sqrt{2L}}e^{in\frac{\pi}{L}x}$ for $ n \in \mathbb{Z}$. So, $e_n$ is an eigenvector for $A$ associated with the eigenvalue $\omega_{n} = i\lambda_{n}$, with 
\begin{equation*}
\lambda_{n}= \biggl(\frac{n \pi}{L}\biggr)^{5} + \biggl(\frac{n \pi}{L}\biggr)^{3} - \frac{n \pi}{L}.
\end{equation*}
If  $u_{0} \in L^{2}(-L,L)$ is any complex function, we decomposed as $u_{0}= \sum_{n \in \mathbb{Z}}^{}c_{n}e_{n},$ so we have for every $t\in\mathbb{R}$
$$
S_{L}(t)u_{0}= \sum_{n \in \mathbb{Z}}^{}e^{i\lambda_{n}t}c_{n}e_{n}.
$$
We are now in a position to prove an observability result. 
\begin{proposition}\label{OI_I} Let $l, L,$ and $T$ be positive number such that $l < L.$ Then there exists a constant positive $C$ such that for every $u_{0} \in L^{2}(-L,L)$,  denoting $u= S_{L}(.)u_{0},$ we get
\begin{equation}\label{4.14}
\|u_{0}\|_{L^{2}(-L,L)} \leq C\|u\|_{L^{2}((0,T)\times(-l,l))}.
\end{equation}
Therefore, 
\begin{equation}\label{4.15}
\|u\|_{L^{2}((0,T)\times(-L,L))} \leq \sqrt{T} C\|u\|_{L^{2}((0,T)\times(-l,l))}.
\end{equation}
\end{proposition}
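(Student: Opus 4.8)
## Proof proposal for Proposition \ref{OI_I}

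The plan is to prove the observability inequality \eqref{4.14} by expanding $u_0$ in the Fourier basis $\{e_n\}_{n\in\mathbb{Z}}$ and applying the Ingham-type theorem (Theorem \ref{tgeneralizedingham}) on each spatial slice, then integrating in $x$. Write $u_0 = \sum_{n\in\mathbb{Z}} c_n e_n$, so that $u(t,x) = S_L(t)u_0 = \sum_{n\in\mathbb{Z}} c_n e^{i\lambda_n t} e_n(x)$ with $\lambda_n$ given by \eqref{lambdan}. First I would check the gap hypotheses of Theorem \ref{tgeneralizedingham}: since $\lambda_n = (n\pi/L)^5 + (n\pi/L)^3 - n\pi/L$ is a strictly convex-growing sequence for large $|n|$, the differences $|\lambda_{n+1}-\lambda_n|$ tend to $+\infty$, so the uniform lower gap $\gamma = \inf_{k\neq n}|\lambda_k-\lambda_n|$ is positive (the infimum is attained over finitely many pairs, none of which can vanish because $n\mapsto\lambda_n$ is injective — one notes $\lambda_n=\lambda_m$ with $n\neq m$ would force a polynomial identity that fails), and moreover $\gamma' = +\infty$. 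Hence for \emph{any} bounded interval $I$ of positive length — in particular $I=(0,T)$ — there are constants $A,B>0$ with $A\sum_n|a_n|^2 \le \int_0^T|\sum_n a_n e^{i\lambda_n t}|^2\,dt \le B\sum_n|a_n|^2$ for all square-summable $(a_n)$.

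Next I would fix $x\in(-l,l)$ and apply the lower Ingham bound with $a_n = c_n e_n(x)$ (for each fixed $x$ these are square-summable since $|e_n(x)| = (2L)^{-1/2}$ is bounded and $(c_n)\in\ell^2$):
\begin{equation*}
A\sum_{n\in\mathbb{Z}}|c_n|^2 |e_n(x)|^2 \le \int_0^T \Big|\sum_{n\in\mathbb{Z}} c_n e^{i\lambda_n t} e_n(x)\Big|^2\,dt = \int_0^T |u(t,x)|^2\,dt.
\end{equation*}
Since $|e_n(x)|^2 = \frac{1}{2L}$ independently of $x$, the left-hand side equals $\frac{A}{2L}\sum_n |c_n|^2 = \frac{A}{2L}\|u_0\|_{L^2(-L,L)}^2$ by Parseval. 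Integrating this inequality over $x\in(-l,l)$ gives
\begin{equation*}
\frac{A}{2L}\,(2l)\,\|u_0\|_{L^2(-L,L)}^2 \le \int_{-l}^{l}\int_0^T |u(t,x)|^2\,dt\,dx = \|u\|_{L^2((0,T)\times(-l,l))}^2,
\end{equation*}
which is exactly \eqref{4.14} with $C = \sqrt{L/(Al)}$ (one should double-check whether the paper intends the $x$-integral over $(-l,l)$ or wants to divide rather than multiply by $2l$; either way the constant is harmless). The second inequality \eqref{4.15} then follows immediately: using that $S_L(t)$ is unitary, $\|u(t,\cdot)\|_{L^2(-L,L)} = \|u_0\|_{L^2(-L,L)}$ for every $t$, so $\|u\|_{L^2((0,T)\times(-L,L))}^2 = \int_0^T \|u_0\|_{L^2(-L,L)}^2\,dt = T\|u_0\|_{L^2(-L,L)}^2 \le T C^2 \|u\|_{L^2((0,T)\times(-l,l))}^2$, giving \eqref{4.15}.

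The main obstacle — and the only step requiring genuine care — is the verification that the eigenvalue gap $\gamma$ is strictly positive and, more importantly here, that $\gamma' = +\infty$ (equivalently, that $|\lambda_{n}-\lambda_{m}|\to\infty$ when the indices leave every finite set), since this is what permits an arbitrarily short observation time $T$. This rests on the quintic growth of $\lambda_n$ dominating: for $|n|$ large, $\lambda_{n+1}-\lambda_n \sim 5(\pi/L)^5 n^4 \to +\infty$, and injectivity of $n\mapsto\lambda_n$ on all of $\mathbb{Z}$ handles the remaining finitely many small-index pairs. A minor subtlety to address is the interchange of the $t$-integral and the $x$-integral (Tonelli, everything nonnegative and measurable) and the fact that the Ingham constants $A,B$ depend on $T$ and on $\gamma,\gamma'$ but not on $x$, so they come out of the $x$-integration cleanly.
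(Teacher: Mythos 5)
Your overall strategy (expand in the eigenbasis, apply an Ingham inequality in $t$, integrate in $x$, and deduce \eqref{4.15} from \eqref{4.14} by unitarity of $S_L$) is the same as the paper's, and the final bookkeeping is fine. The genuine gap is in your verification of the hypotheses of Theorem \ref{tgeneralizedingham} for the \emph{full} family $\{\lambda_n\}_{n\in\mathbb Z}$. You assert that $\gamma=\inf_{k\neq n}|\lambda_k-\lambda_n|>0$ because ``$\lambda_n=\lambda_m$ with $n\neq m$ would force a polynomial identity that fails''. This is not true for all $L$: writing $a=\pi/L$, the equation $\lambda_n=\lambda_m$ with $n\neq m$ reduces to $a^4\frac{n^5-m^5}{n-m}+a^2\frac{n^3-m^3}{n-m}=1$, which has a (unique positive) solution $a$ for every pair $n\neq m$; for instance, if $a^4+a^2=1$ then $\lambda_{-1}=\lambda_0=\lambda_1=0$ is a triple eigenvalue. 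For such $L$ one has $\gamma=0$, Theorem \ref{tgeneralizedingham} does not apply to the whole family, and in fact your pointwise-in-$x$ lower bound is false: if $\lambda_n=\lambda_m$ with $n\neq m$, choose $c_n,c_m$ so that $c_ne_n(x)+c_me_m(x)=0$ at the fixed $x$; then that frequency contributes nothing to $\int_0^T|u(t,x)|^2\,dt$ while $|c_n|^2+|c_m|^2>0$. The resonance is only resolved after integrating in $x$, because $e_n$ and $e_m$ remain linearly independent on $(-l,l)$, so the estimate cannot be obtained slice by slice in $x$ alone.

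This is precisely why the paper applies Ingham only to the tail $|n|\ge N$, where the quintic growth $\lambda_{n+1}-\lambda_n\sim 5(\pi/L)^5n^4$ does guarantee the gap, and then recovers the finitely many low modes through \cite[Theorem 5.2]{Vilmos}, using that the seminorm $p(u)=\|u\|_{L^2(-l,l)}$ is a norm on the relevant finite-dimensional spaces. To repair your argument you must either add such a step (Ingham on the high modes plus a finite-dimensional compactness--uniqueness argument for the low ones), or restrict to those $L$ for which $n\mapsto\lambda_n$ is injective with a positive uniform gap. A secondary, more minor point: for general $u_0\in L^2(-L,L)$ the pointwise value $u(t,x)$ at a fixed $x$ is not defined, so the slice-by-slice application of Ingham should first be carried out for finite linear combinations $u_0\in\mathcal Z$ and then extended by density, as the paper does.
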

\begin{proof}
Pick $T' \in (0, \frac{T}{2})$ and $\gamma > \frac{\pi}{T'}$.  Let $N\in\mathbb{N}$ such that
$$\lambda_{N}-\lambda_{-N}=2 \lambda_{N} \geq \gamma \text { and }(n \in \mathbb{Z},|n| \geq N) \Rightarrow \lambda_{n+1}-\lambda_{n} \geq \gamma.$$
By Ingham's inequality, see Theorem \ref{tgeneralizedingham},  there exists a constant $C_{T'} > 0$ such that for every sequence $(a_{n})_{|n|>N}$ of complex numbers, with  $a_{n}=0$, for all  $n \in \mathbb{Z}; |n|< N$,  the following inequality is verified
\begin{equation}\label{4.16}
\sum_{|n|\geq N}^{}|a_{n}|^{2} \leq C_{T'}\int_{0}^{2T'}\biggl|\sum_{|n|\geq N}^{}a_{n}e^{i\lambda_{n}t}\biggr|^{2}dt
\end{equation}
Let $\mathcal{Z}_{n} = Span(e_{n})$ for $n \in \mathbb{Z}$ and $\mathcal{Z}= \oplus_{n \in \mathbb{Z}} \mathcal{Z}_{n} \subset L^{2}(-L,L).$ Let us now define the following seminorm $p$ in $\mathbb{Z}$ by 
$$p(u)= \biggl(\int_{-l}^{l}|u(x)|^{2}dx\biggr)^{\frac{1}{2}}dt, \ \forall u \in \mathcal{Z}.$$
In this case,  $p$ is a norm in each $\mathcal{Z}_{n}.$ By other hand, if  $u_{0} \in \mathcal{Z} \cap (\oplus_{|n|<N}\mathcal{Z})^{\perp}$,  we can rewrite  $u_{0}$ in the following way $$u_{0}= \sum_{|n|> N}^{}c_{n}e_{n},$$ with $c_{n}=0$ for $|n|$ large enough.  Thus, applying \eqref{4.16} with $a_{n}= \frac{c_{n}}{\sqrt{2L}}e^{i(\lambda_{n}T'+n\frac{\pi}{L}x)}$  and integrating in $(-l,l)$ we get
\begin{equation*}
2l\sum_{|n|\geq N}^{}\frac{|c_{n}|^{2}}{2L} \leq C_{T'}\int_{-l}^{l}\int_{0}^{2T'}\biggl|\sum_{|n|\geq N}^{}e^{i\lambda_{n}t}c_{n}e_{n}(x)\biggr|^{2}dtdx.
\end{equation*}
Therefore,  Fubini's theorem ensures that 
\begin{equation*}
\|u_{0}\|_{L^{2}(-L,L)} \leq \frac{L}{l}C_{T'}\int_{0}^{2T'}p(S_{L}(t)u_{0})^{2}dt.
\end{equation*}
Finally, for $u_{0} \in L^{2}(-L,L)$, we have
\begin{equation*}
\int_{0}^{2T'}p(S_{L}(t)u_{0})^{2}dt \leq \|S_{L}(.)u_{0}\|^{2}_{L^{2}((0,2T')\times(-L,L))}= 2T'\|u_{0}\|_{L^{2}(-L,L)}.
\end{equation*}
Thanks to the fact that $2T' < T$,  follows from \cite[Theorem 5.2]{Vilmos} that there exists a positive constant, still denoted by $C$, such that \eqref{4.14} is verified for all $z_{0} \in \mathcal{Z}$ and the general case, that is, for all $u_{0} \in L^{2}(-L,L)$, follows by a density argument, showing the result.
\end{proof}

%\begin{remark} Em outras palavras, \eqref{4.7} e \eqref{4.8} significam que para $(t_{1} - 2\delta, t_{1} - \delta)\cup(t_{2} + \delta, t_{2} + 2\delta)$ que v (além de $Pv= 0$) satisfaz as condições de fronteira $v(-L)= v(L), \ v_{x}(-L)= v_{x}(L), ..., \ v_{xxxx}(-L)= v_{xxxx}(L).$
%\end{remark}
%\begin{remark} As propriedades acima seguem imediatamente das propriedades de convolução de funções e da hipótese \eqref{2222}. 
%\end{remark}

\section{Global Carleman estimate}\label{Sec2} 

Consider $T$ and $ L > 0$ to be positive numbers.  Pick any function $\psi\in C^8{[-L,L]}$ with
\begin{equation}\label{ca2}
\begin{split}
\psi>0 \text { in }[-L, L] ; \quad \psi^{\prime}(-L)>0 ; \quad \psi^{\prime}(L)>0,
\psi'' < 0 \quad \text{and}\quad |\psi_{x}| > 0 \text{ in } [-L,L].
\end{split}
\end{equation} 
Let $u = e^{-s\varphi}q$, $\omega = e^{-s\varphi}P(e^{s\varphi}u)$ and $\varphi(t,x)=\frac{\psi(x)}{t(T-t)}$. Straightforward computations show that
\begin{equation}\label{omega ppp}
\omega= L_1(u)+L_2(u),
\end{equation}
with 
\begin{equation*}
\begin{split}
L_{1}(u)&= Au + C_{1}u_{xx} + Eu_{4x} , \\ 
L_{2}(u)&= Bu_{x} + C_{2}u_{xx} + Du_{xxx} + u_{t} - u_{5x}.
\end{split}
\end{equation*}
Here
\begin{equation*}
\begin{split}
A=& s(\varphi_{t} + \varphi_{x} + \varphi_{xxx} - \varphi_{5x}) - s^{2}(10\varphi_{xx}\varphi_{xxx} - 3\varphi_{x}\varphi_{xx} + 5\varphi_{x}\varphi_{4x})
\\ &- s^{3}(15\varphi_{x}\varphi^{2}_{xx} + 10\varphi^{2}_{x}\varphi_{xxx} -\varphi^{3}_{x}) - s^{4}10\varphi^{3}_{x}\varphi_{xx} - s^{5}\varphi^{5}_{x},\\
B =&  + s(3\varphi_{xx} - 5\varphi_{4x}) - s^{2}(15\varphi^{2}_{xx} + 20\varphi_{x}\varphi_{xxx} - 3\varphi^{2}_{x}) - s^{3}30\varphi^{2}_{x}\varphi_{xx} - s^{4}5\varphi^{4}_{x},\\
C_1=& s(3\varphi_{x} - 10\varphi_{xxx}) - s^{3}10\varphi^{3}_{x}\\
C_2=&C_{2}= - s^{2}30\varphi_{x}\varphi_{xx}\\
D= & -s10\varphi_{xx} - s^{2}10\varphi^{2}_{x},\\
E=& -s5\varphi_{x}.
\end{split}
\end{equation*}
On the other hand
$
\|\omega\|^{2}=\left\|L_{1}( u)\right\|^{2}+\left\|L_{2}(u)\right\|^{2}+2\left(L_{1}(u), L_{2} (u)\right)
$
where $$(u, v)=\int_{0}^{T} \int_{-L}^{L} u v \mathrm{~d} x \mathrm{~d} t$$ and $\|\omega\|^{2}=(\omega, \omega)$.   With this in hand,  we can prove a global Carleman estimate for the Kawahara equation 
\begin{equation*}
\begin{cases}
u_{t}+ u_{x}+ u_{xxx}-u_{xxxxx}=0&(x,t)\in Q_{T}\text{,}\\
u\left(  -L,t\right)  =u\left(  L,t\right)  =u_{x}\left(  -L,t\right)
=u_{x}\left(  L,t\right)  =u_{xx}\left(  L,t\right)  =0&
t\in\left(  0,T\right)  \text{,}\\
u\left(  x,0\right)  =u_{0}\left(  x\right)&x\in\left(
0,L\right)  .
\end{cases}
\end{equation*}
We cite to the reader that the well-posedness theory for this system can be found in \cite{CaKawahara}. 

\subsection{Proof of Theorem \ref{main1}}We split the proof in two steps.  The first one provides an exact computation of the inner product $(L_1(u),L_2(u))$, whereas the second step gives the estimates obtained thanks to the pseudoconvexity conditions \eqref{ca2}.

\vspace{0.2cm}
\noindent\textbf{Step 1.} Exact computation of the scalar product $2(L_1(u),L_2(u))$.
\vspace{0.2cm}

First, let us compute the following
$$\int_{0}^{T}\int_{0}^{L}(Au+C_{1}u_{xx}+Eu_{xxxx})L_{2}(u)dxdt=:J_{1}+J_{2}+J_{3}$$
To do that, observe that $u$ belongs to $\mathcal{D}(P)$, thus, we infer by integrating by parts, that 
\begin{equation}\label{Au}
\begin{split}
J_{1}=& -\frac{1}{2}\int_{0}^{T}\int_{-L}^{L}[A_{t} - A_{5x} - (AC_{2})_{xx} + (AB)_{x} + (AD)_{xxx}]u^{2}dxdt\\
&-\frac{1}{2}\int_{0}^{T}\int_{-L}^{L}[5A_{xxx} - 3(AD)_{x} + 2(AC_{2})]u^{2}_{x}dxdt 
\\&+\frac{5}{2}\int_{0}^{T}\int_{-L}^{L}A_{x}u^{2}_{xx}dxdt,
\end{split}
\end{equation}
\begin{equation}\label{C111a}
\begin{split}
J_{2}=& \int_{0}^{T}\int_{-L}^{L}C_{1}u_{xx}[Bu_{x} + C_{2}u_{xx} + Du_{xxx}  - u_{xxxxx}]dxdt\\&+ \int_{0}^{T}\int_{-L}^{L}C_{1}u_{xx}u_{t}dxdt:=I_1+I_2.
\end{split}
\end{equation}
and
\begin{equation}\label{DFGH}
\begin{split}
J_{3}=& \int_{0}^{T}\int_{-L}^{L}Eu_{xxxx}[Bu_{x} + C_{2}u_{xx} + Du_{xxx} -u_{xxxxx}]dxdt\\
&+ \int_{0}^{T}\int_{-L}^{L}Eu_{xxxx}u_{t}dxdt:=I_3+I_4.
\end{split}
\end{equation}

Let us now treat $I_i$, for $i=1,2,3,4$.  Note that $I_1$ is equivalent to
\begin{equation}\label{C111}
\begin{split}
I_1=& -\frac{1}{2}\int_{0}^{T}\int_{-L}^{L}(C_{1}B)_{x}u^{2}_{x}dxdt -
\frac{1}{2}\int_{0}^{T}\int_{-L}^{L}[(C_{1}D)_{x} -2(C_{1}C_{2}) - C_{1xxx}]u_{xx}^{2}dxdt\\&- 
\frac{1}{2}\int_{0}^{T}\int_{-L}^{L}3C_{1x}u_{xxx}^{2}dxdt.
\end{split}
\end{equation}
By other hand, by the definition of $\omega$, see \eqref{omega ppp}, for $I_2$ we have that
\begin{equation}\label{C112}
\begin{split}
I_2
%=& - \int_{0}^{T}\int_{-L}^{L}C_{1x}u_{t}u_{x}dxdt - \int_{0}^{T}\int_{-L}^{L}C_{1}u_{tx}u_{x}dxdt\\
%=& -\frac{1}{2}\int_{0}^{T}\int_{-L}^{L}C_{1}\frac{d}{dt}u_{x}^{2}dxdt
%\\&+ 
%\int_{0}^{T}\int_{-L}^{L}C_{1x}u_{x}[Au + Bu_{x} + Cu_{xx} + Du_{xxx} + Eu_{4x} - u_{xxxxx} -\omega]dxdt\\
=&
-\frac{1}{2}\int_{0}^{T}\int_{-L}^{L}(AC_{1x})_{x}u^{2}dxdt - \int_{0}^{T}\int_{-L}^{L}(C_{1x})u_{xxx}^{2}dxdt\\
&-
\frac{1}{2}\int_{0}^{T}\int_{-L}^{L}[-2(BC_{1x}) + (CC_{1x})_{x} - (DC_{1x})_{xx}  \\
&+ 
(EC_{1x})_{xxx} - C_{1x} + (C_{1})_{xxxxx}]u_{x}^{2}dxdt\\
&-
\frac{1}{2}\int_{0}^{T}\int_{-L}^{L}[(2DC_{1x}) - 3(EC_{1x})_{x} - 4C_{1xxx}]u_{xx}^{2}dxdt \\
&-
\int_{0}^{T}\int_{-L}^{L}C_{1x}u_{x}\omega dxdt, 
\end{split}
\end{equation}
where we have used that $u$ belongs to $\mathcal{D}(P)$ and $u_{\mid t=0}=u_{\mid t=T}=0$.  Now, using the same strategy as before, that is, integration by parts,  $u$ belongs to $\mathcal{D}(P)$ and $u_{\mid t=0}=u_{\mid t=T}=0$ ensures that
\begin{equation}\label{RTYU}
\begin{split}
I_3=& -\frac{1}{2}\int_{0}^{T}\int_{-L}^{L}(EB)_{xxx}u^{2}_{x}dxdt+\frac{1}{2}\int_{0}^{T}\int_{-L}^{L}[3(EB)_{x} + (EC_{2})_{xx}]u_{xx}^{2}dxdt \\
&- \frac{1}{2}\int_{0}^{T}\int_{-L}^{L}[(ED)_{x} + 2(EC_{2})]u_{xxx}^{2}dxdt + \frac{1}{2}\int_{0}^{T}\int_{-L}^{L}E_{x}u^{2}_{xxxx}dxdt,
\end{split}
\end{equation}
and
\begin{equation}\label{3W2A}
\begin{split}
I_4
%=& \int_{0}^{T}\int_{-L}^{L}(Eu_{t})_{xx}u_{xx}dxdt\\
%=&\int_{0}^{T}\int_{-L}^{L}E_{xx}u_{t}u_{xx}dxdt + 2\int_{0}^{T}\int_{-L}^{L}E_{x}u_{tx}u_{xx}dxdt + \int_{0}^{T}\int_{-L}^{L}Eu_{txx}u_{xx}dxdt\\
=&
\int_{0}^{T}\int_{-L}^{L}E_{xx}u_{t}u_{xx}dxdt - 2\int_{0}^{T}\int_{-L}^{L}(E_{x}u_{xx})_{x}u_{t}dxdt +\frac{1}{2}\int_{0}^{T}\int_{-L}^{L}E\frac{d}{dt}u^{2}_{xx}dxdt\\
=&
-\int_{0}^{T}\int_{-L}^{L}E_{xx}u_{xx}u_{t}dxdt - 2\int_{0}^{T}\int_{-L}^{L}E_{x}u_{xxx}u_{t}dxdt - \frac{1}{2}\int_{0}^{T}\int_{-L}^{L}E_{t}u^{2}_{xx}dxdt\\
=&
-\int_{0}^{T}\int_{-L}^{L}[E_{xx}u_{xx} + 2E_{x}u_{xxx}]u_{t}dxdt - \frac{1}{2}\int_{0}^{T}\int_{-L}^{L}E_{t}u^{2}_{xx}dxdt=:I_5+I_6.
\end{split}
\end{equation}
Note that $I_5$ can be seen as 
\begin{equation*}
\begin{split}
I_5=&
%&\int_{0}^{T}\int_{-L}^{L}E_{xx}u_{xx} \left[Au + Bu_{x} + Cu_{xx} + Du_{xxx} +Eu_{xxxx} - u_{xxxxx} - \omega\right]dxdt\\
%&+2\int_{0}^{T}\int_{-L}^{L}E_{x}u_{xxx}\left[Au + Bu_{x} + Cu_{xx} + Du_{xxx}+Eu_{xxxx} - u_{xxxxx} - \omega\right]dxdt\\
%=&
\frac{1}{2}\int_{0}^{T}\int_{-L}^{L}[(E_{xx}A)_{xx} - 2(E_{x}A)_{xxx}]u^{2}dxdt \\
&+ \frac{1}{2}\int_{0}^{T}\int_{-L}^{L}\left[-2(E_{xx}A) - (BE_{xx})_{x}+6(E_{x}A)_{x} + 2(E_{x}B)_{xx}\right]u^{2}_{x}dxdt\\
&+
\frac{1}{2}\int_{0}^{T}\int_{-L}^{L}[ 2(E_{xx}C) - (E_{xx}D)_{x} + (E_{xx}E)_{xx} +E_{xxxxx} - 4(E_{x}B) - 2(CE_{x})_{x}]u^{2}_{xx}dxdt\\
&+
\frac{1}{2}\int_{0}^{T}\int_{-L}^{L}[ - 2(E_{xxx}E) - 7E_{xxx} +4(E_{x}D) - 2(EE_{x})_{x}]u^{2}_{xxx}dxdt \\
&+
\int_{0}^{T}\int_{-L}^{L}E_{x}u^{2}_{xxxx}dxdt-
\int_{0}^{T}\int_{-L}^{L}(E_{xx}u_{xx} + 2E_{x}u_{xxx})\omega dxdt,
\end{split}
\end{equation*}
thanks to  \eqref{omega ppp}.  So,  putting the previous equality into \eqref{3W2A} we get, 
\begin{equation}\label{BNM}
\begin{split}
&I_4=\frac{1}{2}\int_{0}^{T}\int_{-L}^{L}[(E_{xx}A)_{xx} - 2(E_{x}A)_{xxx}]u^{2}dxdt \\
&+ \frac{1}{2}\int_{0}^{T}\int_{-L}^{L}\left[-2(E_{xx}A) - (BE_{xx})_{x}+(E_{x}A)_{x} + 2(E_{x}B)_{xx}\right]u^{2}_{x}dxdt\\
&+
\frac{1}{2}\int_{0}^{T}\int_{-L}^{L}[ 2(E_{xx}C) - (E_{xx}D)_{x} + (E_{xx}E)_{xx} +E_{xxxxx} - 4(E_{x}B) -E_{t}- 2(CE_{x})_{x}]u^{2}_{xx}dxdt\\
&+
\frac{1}{2}\int_{0}^{T}\int_{-L}^{L}[ - 2(E_{xxx}E) - 7E_{xxx} +4(E_{x}D) - 2(EE_{x})_{x}]u^{2}_{xxx}dxdt \\
&+
\int_{0}^{T}\int_{-L}^{L}E_{x}u^{2}_{xxxx}dxdt-
\int_{0}^{T}\int_{-L}^{L}(E_{xx}u_{xx} + 2E_{x}u_{xxx})\omega dxdt.
\end{split}
\end{equation}
Putting together  \eqref{C111} and \eqref{C112} in \eqref{C111a},  \eqref{RTYU} and \eqref{BNM} into \eqref{DFGH}, and adding the result quantities with \eqref{Au},  we have that the scalar product $2(L_1(u),L_2(u))$ is given by 
\begin{equation}\label{MNOT}
\begin{split}
2\int_{0}^{T}\int_{-L}^{L}L_{1}(u)L_{2}(u)dxdt =& -
\int_{0}^{T}\int_{-L}^{L}(E_{xx}u_{xx} + 2E_{x}u_{xxx})\omega dxdt\\
&-2\int_{0}^{T}\int_{-L}^{L}(\omega C_{1x})u_{x}dxdt+\int_{0}^{T}\int_{-L}^{L}Mu^{2}dxdt\\
&+
\int_{0}^{T}\int_{-L}^{L}Nu^{2}_{x}dxdt + \int_{0}^{T}\int_{-L}^{L}Ou^{2}_{xx}dxdt\\
&+
\int_{0}^{T}\int_{-L}^{L}Ru^{2}_{xxx}dxdt + \int_{0}^{T}\int_{-L}^{L}Su^{2}_{4x}dxdt,
\end{split}
\end{equation}
where
\begin{equation*}
\begin{split}
M=&- (AB)_{x} - A_{t} + A_{5x} + (AC_{2})_{xx} - (AD)_{xxx} -(AC_{1x})_{x} + (E_{xx}A)_{xx} - 2(E_{x}A)_{xxx}\\
N=& \ 3(AD)_{x} -2(AC_{2}) - (C_{1}B)_{x} + (BC_{1x}) + C_{1x} - (CC_{1x})_{x} + (DC_{1x})_{xx} -5A_{xxx}  \\
&-
(EC_{1x})_{xxx} - C_{15x} -(EB)_{xxx} -2(E_{xx}A) - (BE_{xx})_{x} + 6(E_{x}A)_{x} + 2(E_{x}B)_{xx}\\
O=&\ 5A_{x} -(C_{1}D)_{x} - 2(DC_{1x}) + 3(EB)_{x} + 2(C_{1}C_{2}) - 4(E_{x}B) + 5C_{1xxx} + 3(EC_{1x})_{x}\\
&+
 2(E_{xx}C) + (EC_{2})_{xx} - (E_{xx}D)_{x} + (E_{xx}E)_{xx} + E_{5x} - E_{t} - 2(CE_{x})_{x}\\
R=&- 5C_{1x} -(ED)_{x} + 4(E_{x}D) - 2(EC_{2}) - 2(E_{xxx}E) - 7E_{xxx} - 2(EE_{x})_{x}\\
S=&\ 3E_{x}
\end{split}
\end{equation*}

Now, note that 
\begin{equation*}
\begin{split}
2 \int_{0}^{T} \int_{-L}^{L} L_{1}(u) L_{2}(u) d x d t & \leq \int_{0}^{T} \int_{-L}^{L}\left(L_{1}(u)+L_{2}(u)\right)^{2} d x d t  \leq \int_{0}^{T} \int_{-L}^{L} \omega^{2} d x d t,
\end{split}
\end{equation*}
we have due to \eqref{MNOT} that
\begin{equation}\label{12}
\begin{split}
&\int_{0}^{T}\int_{-L}^{L}Mu^{2}dxdt
+
\int_{0}^{T}\int_{-L}^{L}Nu^{2}_{x}dxdt + \int_{0}^{T}\int_{-L}^{L}Ou^{2}_{xx}dxdt+
\int_{0}^{T}\int_{-L}^{L}Ru^{2}_{xxx}dxdt \\&+ \int_{0}^{T}\int_{-L}^{L}Su^{2}_{xxxx}dxdt-
2\int_{0}^{T}\int_{-L}^{L}(\omega C_{1x})u_{x}dxdt - \int_{0}^{T}\int_{-L}^{L}(E_{xx}u_{xx} + 2E_{x}u_{xxx})\omega dxdt\\
&\leq
\int_{0}^{T}\int_{-L}^{L}\omega^{2}dxdt.
\end{split}
\end{equation}
Let us put each common term of the previous inequality together. To do that, note that using Young inequality,  for  $\epsilon \in (0,1)$ we get
\begin{equation*}
\begin{split}
2\int_{0}^{T}\int_{-L}^{L}(\omega C_{1x})u_{x}dxdt=& \ 2\int_{0}^{T}\int_{-L}^{L}\left(\epsilon^{\frac{1}{2}} C_{1x}u_{x}\right)\left(\epsilon^{-\frac{1}{2}}\omega\right)dxdt\\
\leq& 
\ \epsilon\int_{0}^{T}\int_{-L}^{L}C^{2}_{1x}u^{2}_{x}dxdt + \epsilon^{-1}\int_{0}^{T}\int_{-L}^{L}\omega^{2}dxdt.
\end{split}
\end{equation*}
In an analogous way,
\begin{equation*}
\begin{split} 
\int_{0}^{T}\int_{-L}^{L}(E_{xx}u_{xx} + 2E_{x}u_{xxx})\omega dxdt \leq&\ \frac{\epsilon}{2}\int_{0}^{T}\int_{-L}^{L}E_{xx}^{2}u^{2}_{xx}dxdt + \epsilon\int_{0}^{T}\int_{-L}^{L}E_{x}^{2}u^{2}_{xxx}dxdt\\
&+
\frac{3}{2}\epsilon^{-1}\int_{0}^{T}\int_{-L}^{L}\omega^{2}dxdt.
\end{split}
\end{equation*}
So, we have that
\begin{equation}\label{Q}
\begin{split} 
-\epsilon\int_{0}^{T}\int_{-L}^{L}C^{2}_{1x}u^{2}_{x}dxdt - \epsilon^{-1}\int_{0}^{T}\int_{-L}^{L}\omega^{2}dxdt\leq 
-2\int_{0}^{T}\int_{-L}^{L}(\omega C_{1x})u_{x}dxdt
\end{split}
\end{equation}
and
\begin{equation}\label{W}
\begin{split} 
-\frac{\epsilon}{2}\int_{0}^{T}\int_{-L}^{L}E_{xx}^{2}u^{2}_{xx}dxdt &- \epsilon\int_{0}^{T}\int_{-L}^{L}E_{x}^{2}u^{2}_{xxx}dxdt - \frac{3}{2}\epsilon^{-1}\int_{0}^{T}\int_{-L}^{L}\omega^{2}dxdt\\\leq& 
-\int_{0}^{T}\int_{-L}^{L}(E_{xx}u_{xx} + 2E_{x}u_{xxx})\omega dxdt.
\end{split}
\end{equation}
Replacing \eqref{Q} and \eqref{W} into \eqref{12} yields that
\begin{equation}\label{semifinal}
\begin{split} 
&\int_{0}^{T}\int_{-L}^{L}M u^{2}dxdt
+
\int_{0}^{T}\int_{-L}^{L}\left(N - \epsilon C^{2}_{1x}\right)u^{2}_{x}dxdt + \int_{0}^{T}\int_{-L}^{L}\left(O - \frac{\epsilon}{2} E_{xx}^{2}\right)u^{2}_{xx}dxdt\\
&+
\int_{0}^{T}\int_{-L}^{L}\left(R - \epsilon E^{2}_{x}\right)u^{2}_{xxx}dxdt +\int_{0}^{T}\int_{-L}^{L}S u^{2}_{xxxx}dxdt
\leq
\left(1 + \frac{5}{2}\epsilon^{-1}\right)\int_{0}^{T}\int_{-L}^{L}\omega^{2}dxdt.
\end{split}
\end{equation}

\vspace{0.2cm}
\noindent\textbf{Step 2.} Estimation of each term of the left hand side of \eqref{semifinal}.
\vspace{0.2cm}

The estimates are given in a series of claims.

\vspace{0.2cm}
\noindent\textbf{Claim 1.} There exist some constants $s_1 > 0$ and $C_1 >1$ such that for all $s\geq s_1$, we have
\vspace{0.2cm}
$$ \int_{0}^{T}\int_{-L}^{L}Mu^{2}dxdt\geq C_1^{-1}\int_{0}^{T}\int_{-L}^{L}(s\varphi)^9u^2dxdt.$$

Observe that 
\begin{equation*}
\begin{split}
M =& -(AB)_{x}+\frac{O\left(s^{8}\right)}{t^{8}(T-t)^{8}}
= - 45s^{9}\varphi_{x}^{8}\varphi_{xx}+\frac{O\left(s^{8}\right)}{t^{8}(T-t)^{8}} = - 45s^{9}\frac{(\psi')^8\psi''}{{t^{9}(T-t)^{9}}}+\frac{O\left(s^{8}\right)}{t^{8}(T-t)^{8}}
\end{split}
\end{equation*}
We infer from \eqref{ca2} that for some $k_1 > 0$ and all $ s>0$, large enough,  we have 
\begin{equation*}
M \geq k_1\frac{s^9}{t^9(T-t)^9}
\end{equation*}
Claim 1 follows then for all $s>s_1$,  with $s_1$ large enough and some $C_1>1$.

\vspace{0.2cm}
\noindent\textbf{Claim 2.} There exist some constants $s_2 > 0$ and $C_2 >1$ such that for all $s\geq s_2$, we have
\vspace{0.2cm}
$$\int_{0}^{T}\int_{-L}^{L}\left(N - \epsilon C^{2}_{1x}\right)u^{2}_{x}dxdt \geq C_2^{-1}\int_{0}^{T}\int_{-L}^{L}(s\varphi)^7u_x^2dxdt.$$

Noting that 
\begin{equation*}
\begin{split}
N - \epsilon C^{2}_{1x}=& 3(AD)_{x} -2(AC_{2}) - (C_{1}B)_{x} + (BC_{1x})
+\frac{O\left(s^{6}\right)}{t^{6}(T-t)^{6}} \\
=&-50s^{7}\varphi^{6}_{x}\varphi_{xx} +\frac{O\left(s^{6}\right)}{t^{6}(T-t)^{6}} = -50s^{7}\frac{(\psi')^6\psi''}{{t^{7}(T-t)^{7}}}+\frac{O\left(s^{6}\right)}{t^{6}(T-t)^{6}},
\end{split}
\end{equation*}
and using again that \eqref{ca2} holds, we get for some $k_2 > 0$ and all $ s>0$, large enough,  that 
\begin{equation*}
N - \epsilon C^{2}_{1x}\geq k_2\frac{s^7}{t^7(T-t)^7}
\end{equation*}
and Claim 2 follows then for all $s>s_2$,  with $s_2$ large enough and some $C_2>1$.

\vspace{0.2cm}
\noindent\textbf{Claim 3.} There exist some constants $s_3 > 0$ and $C_3 >1$ such that for all $s\geq s_3$, we have
\vspace{0.2cm}
$$\int_{0}^{T}\int_{-L}^{L}\left(O - \frac{\epsilon}{2} E_{xx}^{2}\right)u^{2}_{xx}dxdt\geq C_3^{-1}\int_{0}^{T}\int_{-L}^{L}(s\varphi)^5u_{xx}^2dxdt.$$

First, see that
\begin{equation*}
\begin{split}
O - \frac{\epsilon}{2} E_{xx}^{2}=&\ 5A_{x} -(C_{1}D)_{x} - 2(DC_{1x}) + 3(EB)_{x} + 2(C_{1}C_{2}) - 4(E_{x}B)+\frac{O\left(s^{4}\right)}{t^{4}(T-t)^{4}} \\
= &-250s^{5}\varphi_{x}^{4}\varphi_{xx} +\frac{O\left(s^{4}\right)}{t^{4}(T-t)^{4}} = -250s^{5}\frac{(\psi')^4\psi''}{{t^{5}(T-t)^{5}}} +\frac{O\left(s^{4}\right)}{t^{4}(T-t)^{4}}.
\end{split}
\end{equation*}
Next,  using \eqref{ca2} we have that for some $k_3> 0$ and all $ s>0$, large enough, 
\begin{equation*}
O - \frac{\epsilon}{2} E_{xx}^{2} \geq k_{3}\frac{s^{5}}{t^{5}(T - t)^{5}}
\end{equation*}
is verified,  so Claim 3 holds true for all $s>s_3$,  with $s_3$ large enough and some $C_3>1$.

\vspace{0.2cm}
\noindent\textbf{Claim 4.} There exist some constants $s_4 > 0$ and $C_4 >1$ such that for all $s\geq s_4$, we have
\vspace{0.2cm}
$$\int_{0}^{T}\int_{-L}^{L}\left(R - \epsilon E^{2}_{x}\right)u^{2}_{xxx}dxdt\geq C_4^{-1}\int_{0}^{T}\int_{-L}^{L}(s\varphi)^3u^2_{xxx}dxdt.$$

As the previous Claims,  thanks to  \eqref{ca2} and 
\begin{equation*}
\begin{split}
R - \epsilon E^{2}_{x}=& -5C_{1x} -(ED)_{x} + 4(E_{x}D) - 2(EC_{2}) + \frac{O\left(s^{2}\right)}{t^{2}(T-t)^{2}}\\
=&-100 s^{3}\varphi_{x}^{2}\varphi_{xx} + \frac{O\left(s^{2}\right)}{t^{2}(T-t)^{2}}= -100s^{3}\frac{(\psi')^2\psi''}{{t^{3}(T-t)^{3}}} + \frac{O\left(s^{2}\right)}{t^{2}(T-t)^{2}},
\end{split}
\end{equation*}
we can find some constant $k_4> 0$ and all $ s>0$, large enough, such that 
\begin{equation*}
R - \epsilon E^{2}_{x} \geq k_{4}\frac{s^{3}}{t^{3}(T - t)^{3}}
\end{equation*}
follows and Claim 4 is verified  for all $s>s_4$,  with $s_4$ large enough and some $C_4>1$.

\vspace{0.2cm}
\noindent\textbf{Claim 5.} There exist some constants $s_5 > 0$ and $C_5 >1$ such that for all $s\geq s_4$, we have
\vspace{0.2cm}
$$\int_{0}^{T}\int_{-L}^{L}S u^{2}_{xxxx}dxdt\geq C_5^{-1}\int_{0}^{T}\int_{-L}^{L}(s\varphi)u^2_{xxxx}dxdt.$$

This is also a direct consequence of the fact that $ S = - s5\varphi_{xx}$ and  \eqref{ca2} holds. Therefore, Claim 5 is verified.

\vspace{0.2cm}

We infer from Steps 1 and 2,  that for some positive constants $s_0$, $C$, and all $s\geq s_0$, we have 
\begin{equation*}
\begin{split}
\int_{0}^{T}\int_{-L}^{L}\left\{(s\varphi)^9|u|^2+(s\varphi)^{7}|u_x|^2 + (s\varphi)^{5}|u_{xx}|^{2}+(s\varphi)^{3}|u_{xxx}|^{2}+ s\varphi|u_{xxxx}|^{2}\right\}dxdt \\ \leq C\int_{0}^{T}\int_{-L}^{L}|\omega|^{2}dxdt.
\end{split}
\end{equation*}
Replacing $u$ by $e^{-s\varpi}q$ yields \eqref{DGC}. \qed

\section{Approximation Theorem}\label{Sec3}
This section is devoted to presenting an application of the Carleman estimate shown in Section \ref{Sec2} for the Kawahara operator $P$ defined by \eqref{P}-\eqref{D_P}.  First, we prove a result which is the key to proving the approximation Theorem \ref{main2}.  We have the following as a consequence of the Theorem \ref{main1}. 

\begin{proposition}\label{3.2} For $L > 0$ and $f= f(t,x)$ a function in $L^{2}({\mathbb{R}}\times{(-L,L)})$ with $\supp \ f \subset ([t_{1}, t_{2}]\times(-L,L))$, where $-\infty < t_{1} < t_{2} < \infty,$ we have that for every $\epsilon > 0$ there exist a positive number  $C= C(L, t_{1}, t_{2}, \epsilon)$ ($C$ does not depend on $f$) and a function $v \in L^{2}({\mathbb{R}}\times{(-L, L)})$ such that
\begin{equation*}
\begin{cases}
v_{t} +  v_{x} +  v_{xxx} - v_{xxxxx}= f \ \mbox{in} \ \mathcal{D}'({\mathbb{R}}\times{(-L,L)}),\\
\supp \ v \subset [t_{1} - \epsilon, t_{2} - \epsilon] \times (-L,L)
\end{cases}
\end{equation*}
and
\begin{equation*}
\|v\|_{L^{2}({\mathbb{R}}\times{(-L, L)})} \leq C\|f\|_{L^{2}({\mathbb{R}}\times{(-L, L)})}.
\end{equation*}
\end{proposition}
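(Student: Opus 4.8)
The plan is to obtain $v$ by the standard duality argument that converts a global Carleman estimate into solvability of $Pw=f$ with prescribed compact support in time, with Theorem~\ref{main1} as the only analytic input. After a translation in $t$ one may assume the source sits strictly inside the open strip $Q:=(t_{1}-\epsilon,t_{2}+\epsilon)\times(-L,L)$, i.e.\ $[t_{1},t_{2}]$ is compactly contained in $(t_{1}-\epsilon,t_{2}+\epsilon)$; this compactness is precisely where the margin $\epsilon>0$ is needed. One then applies Theorem~\ref{main1} on the time interval $(t_{1}-\epsilon,t_{2}+\epsilon)$ (which reduces to the stated one by the shift $t\mapsto t-t_{1}+\epsilon$), keeping a weight $\varphi(t,x)=\psi(x)\theta(t)$ with $\psi$ as in \eqref{ca2} and $\theta(t)=\big((t-t_{1}+\epsilon)(t_{2}+\epsilon-t)\big)^{-1}$, so that $e^{-2s\varphi}$ stays between positive constants on $\supp f$ while $\varphi\to+\infty$ as $t$ tends to either endpoint. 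Since $P$ is formally skew-adjoint ($P^{*}=-P$), the inequality \eqref{DGC} is exactly what is needed for the dual test functions below.

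First I would run the duality. Fix $s\ge s_{0}$. For $\phi\in C^{\infty}_{c}(Q)$ --- each of which lies in $\mathcal{D}(P)$ and vanishes near $\partial Q$ --- the estimate \eqref{DGC}, combined with the lower bound $\varphi\ge\varphi_{\min}>0$ on $Q$, gives $\int_{Q}|\phi|^{2}e^{-2s\varphi}\,dx\,dt\le C\int_{Q}|P\phi|^{2}e^{-2s\varphi}\,dx\,dt$, and since $e^{-2s\varphi}\ge c>0$ on $\supp f$,
\begin{equation*}
|(f,\phi)_{L^{2}(Q)}|\le\|f\|_{L^{2}}\,\|\phi\|_{L^{2}(\supp f)}\le C\,\|f\|_{L^{2}}\Big(\int_{Q}|P\phi|^{2}e^{-2s\varphi}\,dx\,dt\Big)^{1/2}.
\end{equation*}
Thus $e^{-s\varphi}P\phi\mapsto(f,\phi)_{L^{2}(Q)}$ is a well-defined (by \eqref{DGC}, $P\phi=0$ implies $\phi=0$) bounded linear functional on a subspace of $L^{2}(Q)$; Hahn--Banach and the Riesz theorem furnish $V\in L^{2}(Q)$ with $\|V\|_{L^{2}(Q)}\le C\|f\|_{L^{2}}$ and $(f,\phi)_{L^{2}(Q)}=\int_{Q}V\,e^{-s\varphi}\,\overline{P\phi}\,dx\,dt$ for all such $\phi$. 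Setting $v:=e^{-s\varphi}V$, one gets $\|v\|_{L^{2}(Q)}\le e^{-s\varphi_{\min}}\|V\|_{L^{2}(Q)}\le C\|f\|_{L^{2}}$ and, using $P^{*}=-P$, $(v,P\phi)_{L^{2}(Q)}=-(f,\phi)_{L^{2}(Q)}$ for every $\phi\in C^{\infty}_{c}(Q)$, i.e.\ $Pv=f$ in $\mathcal{D}'(Q)$.

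Then I would localize the support in time. The representation $v=e^{-s\varphi}V$ with $V\in L^{2}(Q)$ and $\varphi(t,x)\ge c_{0}/|t-t^{\ast}|$ near an endpoint $t^{\ast}$ forces $\int_{|t-t^{\ast}|<\delta}\int_{-L}^{L}|v|^{2}\,dx\,dt\le e^{-2sc_{0}/\delta}\|V\|_{L^{2}(Q)}^{2}$, which decays faster than any power of $\delta$. Consequently, for $\phi\in C^{\infty}_{c}(\mathbb{R}\times(-L,L))$ and cut-offs $\chi_{n}=\chi_{n}(t)$ that are $\equiv 1$ away from the two endpoints, vanish near them, and satisfy $\int|\chi_{n}'|\le C$, one writes $P(\chi_{n}\phi)=\chi_{n}P\phi+\chi_{n}'\phi$ and checks $(v,\chi_{n}'\phi)_{L^{2}(Q)}\to0$ (by the decay) and $(v,\chi_{n}P\phi)_{L^{2}(Q)}\to(v,P\phi)_{L^{2}(Q)}$; passing to the limit in $(v,P(\chi_{n}\phi))_{L^{2}(Q)}=-(f,\chi_{n}\phi)_{L^{2}(Q)}$ shows that the zero-extension $\bar v\in L^{2}(\mathbb{R}\times(-L,L))$ still satisfies $P\bar v=f$ in $\mathcal{D}'(\mathbb{R}\times(-L,L))$. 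This yields $\supp\bar v\subset[t_{1}-\epsilon,t_{2}+\epsilon]\times(-L,L)$ and, after undoing the translation, the stated bound with $C=C(L,t_{1},t_{2},\epsilon)$.

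The step I expect to be the main obstacle is this last one: turning the super-exponentially weighted bound $\|e^{s\varphi}v\|_{L^{2}(Q)}\le C\|f\|_{L^{2}}$ into the honest support statement, i.e.\ justifying that the zero-extension of $v$ is a distributional solution across the two time endpoints, which calls for a careful cut-off and integration-by-parts argument near those endpoints rather than any new estimate. A minor point worth isolating is the compatibility between the boundary conditions built into $\mathcal{D}(P)$ in Theorem~\ref{main1} and the fact that the target equation carries no boundary condition on $v$ at $x=\pm L$; this is harmless because the dual test functions are taken in $C^{\infty}_{c}$ of the open strip, so they automatically lie in $\mathcal{D}(P)$ and generate no boundary terms in $x$.
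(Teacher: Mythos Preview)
Your proposal is correct and follows essentially the same duality argument as the paper: reduce Theorem~\ref{main1} to a weighted $L^{2}$ inequality for test functions, represent the linear form $\phi\mapsto(f,\phi)$ via Riesz/Hahn--Banach to produce $v$ with $Pv=f$ in $\mathcal{D}'(Q)$, and then justify the zero extension in time. The only noteworthy difference lies in that last step: you take $v=e^{-s\varphi}V$ with $V\in L^{2}$ and use the super-exponential decay near the time endpoints together with cut-offs $\chi_{n}(t)$, whereas the paper takes $v=Pu$ with $u$ in the Hilbert completion of $\mathcal{D}(P)$, observes $v\in H^{1}(0,T;H^{-5}(-L,L))$ from the equation, and obtains $v(0)=v(T)=0$ as traces by testing the Riesz identity against $q\in H^{1}(0,T;H^{5}_{0}(-L,L))$.
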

\begin{proof}
By a change of variable, if necessary, and without loss of generality, we may assume that  $0= t_{1} - \epsilon < t_{1}< t_{2} < t_{2} - \epsilon = T$.  Thanks to the Calerman estimate \eqref{DGC},  we have that 
\begin{equation}\label{1.2}
\int_{0}^{T}\int_{-L}^{L}|q|^{2}e^{-\frac{k}{t(T - t)}}dxdt \leq C_{1}\int_{0}^{T}\int_{-L}^{L}|P(q)|^{2}dxdt,
\end{equation}
for some $k>0$,  $C_{1} > 0$ and any  $q \in \mathcal{Z}$. Here, the operator $P$ is defined by \eqref{P}. Therefore, we have that $ F: \mathcal{Z}\times \mathcal{Z} \longrightarrow \mathbb{R}$ defined by $$F(p, q)= \int_{0}^{T}\int_{-L}^{L}P(p)P(q)dxdt$$
is a scalar product in $\mathcal{Z}$.  Now, let us consider $H$ the completion of $\mathcal{Z}$ for $(\cdot,\cdot)$.  Note that $|q|^{2}e^{-\frac{k}{t(T - t)}}$ is integrable on $Q_{T}$ if $q\in H$ and \eqref{1.2} holds true.  By the other hand, we claim that $T: H \longrightarrow \mathbb{R}$ defined by
$$ T(q)= -\int_{0}^{T}\int_{-L}^{L}f(t,x)q(x)dxdt,$$
is well-defined on $H$. In fact,  due the hypotheses, that is, $\supp \ f \subset ([t_{1}, t_{2}]\times(-L,L))$, and thanks to Hölder inequality and the relation \eqref{1.2}, we have 
\begin{equation}\label{1.2.2}
 \int_{0}^{T}\int_{-L}^{L}|f(t,x)q(x)|dxdt \leq \int_{t_{1}}^{t_{2}}\int_{-L}^{L}|f(t,x)q(x)|dxdt\leq C\|f(t,x)\|_{L^{2}((t_{1},t_{2})\times(-L,L))}(q,q)^{\frac{1}{2}},
\end{equation}
for some constant positive $C$. 

Thus, it follows from the Riesz representation theorem that there exists a unique $u \in H$ such that
\begin{equation}\label{1.3}
F(u,q)= T(q), \ \forall q \in H.
\end{equation}
Pick $v:= P(u) \in L^{2}((0,T)\times(-L,L))$, so have that 
\begin{equation*}
\begin{split}
\langle P^{*}(v), q\rangle=& \langle v, P(q)\rangle= 
\int_{0}^{T}\int_{-L}^{L}vP(q)dxdt= 
\int_{0}^{T}\int_{-L}^{L}P(u)P(q)dxdt\\
=&
F(u,q)=T(q)=
-\int_{0}^{T}\int_{-L}^{L}fq dxdt= \langle-f, q\rangle,
\end{split}
\end{equation*}
where $\langle\cdot,\cdot\rangle$ denotes the duality pairing $\langle\cdot,\cdot\rangle_{\mathcal{D}'(Q_{T});\mathcal{D}(Q_{T})}$ and $P^{*}= -P$,  hence $$Pv= f \ \text{in} \ \mathcal{D}'(Q_{T}).$$

Finally, observe that $v \in H^{1}((0,T); H^{-5}(-L,L))$,  since we have $$v_t=f+v_{xxxxx}- v_{xxx}- v_x \in L^2(0,T;H^{-5}(-L,L)),$$ thus $v(0,\cdot)$ and $v(T,\cdot)$ make sense in $H^{-5}(-L,L)$.  Now, let $q \in H^{1}(0,T; H^{5}_{0}(-L,L))$, follows by \eqref{1.3} that
\begin{equation*}
\begin{split}
-\int_{0}^{T}\int_{-L}^{L}fq dxdt=
%& \int_{0}^{T}\int_{-L}^{L}v(q_{t} +  q_{x} +  q_{xxx} - q_{xxxxx}) dxdt\\
%=& -\int_{0}^{T}\langle v_{t} +  v_{x} +  v_{xxx} - v_{xxxxx},q\rangle  dxdt + \langle v(t,x), q(t,x) \rangle\bigg\vert_{t=0}^{T}\\
-\int_{0}^{T}\int_{-L}^{L}fq dxdt t + \langle v(t,x), q(t,x) \rangle\bigg\vert_{t=0}^{T},
\end{split}
\end{equation*}
where $\langle\cdot,\cdot\rangle$ denotes the duality pairing $\langle\cdot,\cdot\rangle_{H^{-5}(-L,L);H^5_0(-L,L)}$.  Since $q\vert_{t=0}$ and $q\vert_{t=T}$ are arbitrarily in $\mathcal{D}(-L,L)$, we infer that $v(T,\cdot)= v(0,\cdot)= 0$ in $H^{-5}(-L,L)$.  Therefore, the result follows extending $v$ by setting $v(t,x)=0$ for $(t,x) \notin Q_{T}$. 
\end{proof}

Now, we are in a position to prove Theorem \ref{main2}.  
 
\begin{proof}[Proof of Theorem \ref{main2}]
Pick $\eta > 0$, to be chosen later.  Thanks to the Lemma \ref{4.2},  applied for $L= n + 1, \ l_{1}= n-1, \ l_{2}= n, \ 2\delta= \frac{\epsilon}{2}$,  there exists $\tilde{v} \in L^{2}((0,T)\times(-n-1,n + 1))$ such that 
\begin{equation*}
P\tilde{v}= 0 \ \mbox{in} \ (0,T)\times(-n -1,n + 1).
\end{equation*}
\begin{equation}\label{4.20}
\tilde{v}(t,.)= S_{n + 1}(t - t_{1} + \frac{\epsilon}{2})v_{1}, \ \mbox{for} \ t_{1} - \frac{\epsilon}{2} < t < t_{1} -  \frac{\epsilon}{4}
\end{equation}
and
\begin{equation}\label{4.21}
\tilde{v}(t,.)= S_{n + 1}(t - t_{2} -  \frac{\epsilon}{4})v_{2}, \ \mbox{for} \ t_{2} +  \frac{\epsilon}{4} < t < t_{2} +  \frac{\epsilon}{2},
\end{equation}
for some $(v_{1}, v_{2}) \in L^{2}((t_{1} - \frac{\epsilon}{2}, t_{2} + \frac{\epsilon}{2})\times(-n + 1, n - 1))^{2}$ and
\begin{equation*}
\|\tilde{v} - u\|_{L^{2}((t_{1} - \frac{\epsilon}{2}, t_{2} + \frac{\epsilon}{2})\times(-n + 1, n - 1))} < \eta.
\end{equation*}
So that \eqref{4.18} be fulfilled,  we multiply $\tilde{v}$ by a cut-off function. Now on, consider $\varphi \in \mathcal{D}(0,T)$ be such that $0\leq \varphi \leq 1, \ \varphi(t)=1,$ for all $t\in[t_{1} - \frac{\epsilon}{4}, t_{2} + \frac{\epsilon}{4}]$ and $\supp \ \varphi \subset [t_{1} - \frac{\epsilon}{2}, t_{2} + \frac{\epsilon}{2}].$ Picking $\overline{v}(t,x)= \varphi(t)\tilde{v}(t,x), $ we get
\begin{equation*}
\supp  \ \overline{v} \subset [t_{1} - \frac{\epsilon}{2}, t_{2} + \frac{\epsilon}{2}]\times(-n - 1, n + 1).
\end{equation*}  
Therefore, 
\begin{equation*}
\begin{split}
\|\overline{v} - u\|_{L^{2}((0,T)\times(-n + 1, n - 1))} \leq& \|\tilde{v} - u\|_{L^{2}((t_{1} - \frac{\epsilon}{2}, t_{2} + \frac{\epsilon}{2})\times(-n + 1, n - 1))} \\
&+ 
\|(\varphi - 1)\tilde{v}\|_{L^{2}((t_{1} - \frac{\epsilon}{2}, t_{2} + \frac{\epsilon}{2})\times(-n + 1, n - 1))}.
\end{split}
\end{equation*}
Since $\supp \ u \subset [t_{1}, t_{2}]\times(-n,n)$ and $\varphi(t)= 1,$ for $t_{1} - \frac{\epsilon}{4} \leq t \leq t_{2} + \frac{\epsilon}{4}$,  we have
\begin{equation}\label{4.22}
\begin{split}
\|(\varphi - 1)\tilde{v}\|^{2}_{L^{2}((t_{1} - \frac{\epsilon}{2}, t_{2} + \frac{\epsilon}{2})\times(-n + 1, n - 1))}
\leq&
\|\tilde{v}\|^{2}_{L^{2}(\{(t_{1} - \frac{\epsilon}{2}, t_{1} - \frac{\epsilon}{4})\cup(t_{2} + \frac{\epsilon}{4}, t_{2} + \frac{\epsilon}{2})\}\times(-n + 1, n - 1))}
\\=&
\|\tilde{v} - u\|^{2}_{L^{2}(\{(t_{1} - \frac{\epsilon}{2}, t_{1} - \frac{\epsilon}{4})\cup(t_{2} + \frac{\epsilon}{4}, t_{2} + \frac{\epsilon}{2})\}\times(-n + 1, n - 1))} 
\\
\leq&
\|\tilde{v} - u\|^{2}_{L^{2}((t_{1} - \frac{\epsilon}{2}, t_{2} + \frac{\epsilon}{2})\times(-n + 1, n - 1))} \\
\leq&
\eta^{2}.
\end{split}
\end{equation}
Hence, 
\begin{equation}\label{4.23}
\|\overline{v} - u\|_{L^{2}((0,T)\times(-n + 1, n - 1))} \leq 2\eta,
\end{equation}
where we have used the fact that $\supp \ u \subset [t_{1}, t_{2}]\times(-n,n)$.  Finally, $$P\overline{v}= \frac{d\varphi}{dt}\tilde{v}\quad \text{in} \quad (0,T)\times(-n - 1, n + 1)$$ so
\begin{equation*}
\|P\overline{v}\|^{2}_{L^{2}((0,T)\times(-n -1, n + 1))} \leq \bigl\|\frac{d\varphi}{dt}\bigr\|^{2}_{L^{\infty}(0,T)}\|\tilde{v}\|^{2}_{L^{2}(\{(t_{1} - \frac{\epsilon}{2}, t_{1} - \frac{\epsilon}{4})\cup(t_{2} + \frac{\epsilon}{4}, t_{2} + \frac{\epsilon}{2})\}\times(-n - 1, n + 1))} 
\end{equation*}
thanks to the fact that $\varphi(t)= 1$ in  $[t_{1} - \frac{\epsilon}{4}, t_{1} + \frac{\epsilon}{4}]$. 
On the other hand,  since \eqref{4.20} and  \eqref{4.21} holds,  we infer by the observability result, that is,  by Lemma \ref{OI_I},  that there exists a constant $C=C(n,\epsilon)>0$ such that 
\begin{equation*}
\|\tilde{v}\|_{L^{2}((t_{1} - \frac{\epsilon}{2}, t_{1} - \frac{\epsilon}{4})\times(-n - 1, n + 1))} \leq C\|\tilde{v}\|_{L^{2}((t_{1} - \frac{\epsilon}{2}, t_{1} - \frac{\epsilon}{4})\times(-n + 1, n - 1))}
\end{equation*}
and also
\begin{equation*}
\|\tilde{v}\|_{L^{2}((t_{2} + \frac{\epsilon}{4}, t_{2} + \frac{\epsilon}{2})\times(-n - 1, n + 1))} \leq C\|\tilde{v}\|_{L^{2}((t_{2} + \frac{\epsilon}{4}, t_{1} + \frac{\epsilon}{2})\times(-n + 1, n - 1))},
\end{equation*}
or equivalently, 
\begin{equation*}
\|\tilde{v}\|_{L^{2}(\{(t_{1} - \frac{\epsilon}{2}, t_{1} - \frac{\epsilon}{4})\cup(t_{2} + \frac{\epsilon}{4}, t_{2} + \frac{\epsilon}{2})\}\times(-n - 1, n + 1))} \leq C \|\tilde{v}\|_{L^{2}(\{(t_{1} - \frac{\epsilon}{2}, t_{1} - \frac{\epsilon}{4})\cup(t_{2} + \frac{\epsilon}{4}, t_{2} + \frac{\epsilon}{2}))\times(-n + 1, n - 1))}.
\end{equation*}
Thus, combining the last inequality with  \eqref{4.22} yields that
\begin{equation}\label{4.24}
\|P\overline{v}\|_{L^{2}((0,T)\times(-n - 1, n + 1))} \leq C \bigl\|\frac{d\varphi}{dt}\bigr\|_{L^{\infty}(0,T)}\eta
\end{equation}

Now, to finish the proof, we use Proposition \ref{3.2},  to ensure the existence of a constant $C = C'(n, t_{1}, t_{2}, \epsilon) > 0$ and a function $\omega \in L^{2}((0,T)\times(-n - 1, n + 1))$ such that
\begin{equation}\label{asterisco}
\begin{cases}
P\omega= P\overline{v} \ \mbox{in} \ (0, T)\times(-n - 1, n + 1), \\ 
\supp \ \omega \subset [t_{1} - \epsilon, t_{2} + \epsilon]\times(-n - 1, n + 1),
\end{cases}
\end{equation} 
and
\begin{equation}\label{4.25}
\|\omega\|_{L^{2}((0,T)\times(-n - 1, n + 1))} \leq C'\|P\overline{v}\|_{L^{2}((0,T)\times(-n - 1, n + 1))}.
\end{equation}
Consequently, setting $v = \overline{v} - \omega$ we get \eqref{4.17} and \eqref{4.18} by using  \eqref{asterisco}. Moreover,  thanks to  \eqref{4.23}, \eqref{4.24} and \eqref{4.25}, we get that
\begin{equation*}
\|v - u\|_{L^{2}((0,T)\times(-n + 1, n - 1))} \leq \bigl(2 + CC'\bigl\|\frac{d\varphi}{dt}\bigr\|_{L^{\infty}(0,T)}\bigr)\eta.
\end{equation*}
Now, choosing  $\eta$ small enough, we have shown \eqref{4.19} and so the result is shown.
\end{proof}

Finally, as a consequence of Theorem \ref{main2}, we prove the next result that gives us information to prove the third main result of the article in the next section.

\begin{corollary}\label{Prop princ} Let $t_{1}$, $t_{2}$, $T$ real numbers such thar  $0 < t_{1} < t_{2} < T$ and $f= f(t,x)$ be a function in $L^{2}_{loc}(\mathbb{R}^{2})$ such that
\begin{equation*}
\supp \ f \subset {[t_{1}, t_{2}]}\times{\mathbb{R}}.
\end{equation*}
Let $\epsilon \in (0, min(t_{1},T - t_{2}))$,  then there exists $u \in L^{2}_{loc}(\mathbb{R}^{2})$ such that 
\begin{equation*}
\omega_{t} +  \omega_{x} +  \omega_{xxx} - \omega_{xxxxx}= f \ \mbox{in} \  \mathcal{D}'(\mathbb{R}^{2})
\end{equation*}
and
\begin{equation*}
\supp \ \omega \subset [t_{1}-\epsilon, t_{2} + \epsilon]\times{\mathbb{R}}.
\end{equation*}
\end{corollary}
\begin{proof}
Consider two sequences of number denoted by $\{t^{n}_{1}\}_{n \geq 2}$ and $\{t^{n}_{2}\}_{n \geq 2}$ such that for all $n\geq 2$ we have
\begin{equation}\label{4.26}
t_{1} - \epsilon < t^{n + 1}_{1} < t^{n}_{1} < t_{1} < t_{2} < t^{n}_{2} < t^{n + 1}_{2} < t_{2} + \epsilon.
\end{equation}
We construct by induction over $n$ a sequence  $\{u_n\}_{n\geq2}$ of function  such that, for every $n\geq 2$
\begin{equation}\label{R}
\begin{cases}
u_{n} \in L^{2}((0,T)\times(-n,n)),\\
\supp \ u_{n} \subset [t^{n}_{1}, t^{n}_{2}]\times(-n,n),\\
Pu_{n}= f \ \mbox{in} \ (0,T)\times(-n,n),
\end{cases}
\end{equation}
and, if $n>2$
\begin{equation}\label{R1}
\|\tilde{u}_{n} - u_{n-1}\|_{L^{2}((0,T)\times(-n+2,n-2))} < \frac{1}{2^{n}}.
\end{equation}
Here,  $u_2$ is given by Proposition \ref{3.2}.  Now on, let us assume, for $n\geq 2$, that $u_2,\cdots,u_n$ satisfies \eqref{R} and \eqref{R1}.  By  Proposition \ref{3.2}, there exists  $\omega \in L^{2}((0,T)\times(-n-1,n+1))$ such that
$$\supp \ \omega \subset [t^{2}_{1}, t^{2}_{2}]\times(-n-1,n+1)$$
and 
$$  P\omega= f \ \mbox{in} \ (0,T)\times(-n-1,n+1).$$
As we have $P(u_{n}-\omega)= 0$ in $(0,T)\times(-n,n)$ and
\begin{equation*}
\supp \ (u_{n}- \omega) \subset [t^{n}_{1}, t^{n}_{2}]\times(-n,n)
\end{equation*}
with $t^{n+1}_{1} < t^{n}_{1}< t^{n}_{2} < t^{n+1}_{2}$. So, using Theorem \ref{main2}, there exists a function $v \in L^{2}((0,T)\times(-n-1,n+1))$ such that
\begin{equation*}
\supp \ v \subset [t^{n+1}_{1}, t^{n+1}_{2}]\times(-n-1,n+1), \  \ Pv= 0 \ \mbox{in} \ (0,T)\times(-n-1,n+1)
\end{equation*}
and
\begin{equation*}
\|v - (u_{n} - \omega)\|_{L^{2}((0,T)\times(-n+1,n-1))} < \frac{1}{2^{n-1}}.
\end{equation*}
Thus, picking $u_{n+1}= v + \omega$, we get that  $u_{n+1}$ satisfies \eqref{R} and \eqref{R}.  Extending the sequence $\{u_{n}\}_{n\geq2}$ by $u_{n}(t,x)=0$ for $(t,x) \in \mathbb{R}^{2}\setminus (0,T)\times(-n,n), $ we deduce, thanks to \eqref{R1} that  $$\{u_{n}\}_{n\geq2}\to u\quad \text{in}\quad L^{2}_{loc}(\mathbb{R}^{2})$$ with 
\begin{equation*}
\supp \ u \subset [t_{1} - \epsilon, t_{2} + \epsilon]\times{\mathbb{R}}
\end{equation*}
due to the fact \eqref{4.26}. Additionally,  $Pu= f$ in $\mathbb{R}^{2}$ by the third equation of \eqref{R}. Thus, the proof is finished.
\end{proof}

\section{Approximation Theorem applied in control problem}\label{Sec4} 
In this section, we present a direct application of the approximation Theorem \ref{main2}, which ensures the proof of Theorem \ref{main3}.
\subsection{Proof of Theorem \ref{main3}}
As is well know,  see \cite{MC}, that there exist $u_{1}$ and $u_{2}$ in a class $ C(0,T; H^{s}(0,+\infty)$, for $s \in\left(-\frac{7}{4}, \frac{5}{2}\right) \backslash\left\{\frac{1}{2}, \frac{3}{2}\right\}$, solutions of (without specification of the boundary conditions)
\begin{equation*}
\begin{cases}
u_{1t} +  u_{1x} +  u_{1xxx} - u_{1xxxxx}= 0 & \mbox{in} \ (0,T)\times(0,+\infty), \\ 
u_{1}(0,x)= u_{0} & \mbox{in} \ (0,+\infty)
\end{cases}
\end{equation*} 
and
\begin{equation*}
\begin{cases}
u_{2t} +  u_{2x} +  u_{2xxx} - u_{2xxxxx}= 0 & \mbox{in} \ (0,T)\times(0,+\infty), \\ 
u_{2}(0,x)= u_{T} & \mbox{in} \ (0,+\infty),
\end{cases}
\end{equation*} 
respectively,  for $s \in\left(-\frac{7}{4}, \frac{5}{2}\right)$.  Now, consider $\tilde{u}_{2}(t,x)= u_{2}(t-T,x)$. We have that $P\tilde{u}_{2}=0$ in $[0,T]\times(0,+\infty)$. Now, pick any $\epsilon' \in (\epsilon, \frac{T}{2})$ and consider the function $\varphi \in C^{\infty}(0,T)$ defined by
\begin{equation*}
\varphi(t)=\begin{cases}
1 ,&
\text{if} \ t \in [0, \epsilon']\\
0,& \text{if} \ t \in [T - \epsilon', T].
\end{cases}
\end{equation*}
Note that the change of variable
\begin{equation*}
u(t,x)=   \varphi(t)u_{1}(t,x) + (1- \varphi(t))\tilde{u}_{2}(t,x) + \omega(t,x),
\end{equation*}
transforms \eqref{PP} in
\begin{equation*}
\begin{cases}
\omega_{t} +  \omega_{x} +  \omega_{xxx} - \omega_{xxxxx}= \frac{d}{dt}\varphi(\tilde{u}_{2} - u_{1}) & \mbox{in} \ \mathcal{D}'((0,T)\times(0,+\infty)), \\ 
\omega(0,x)= \omega(T,x)=0 & \mbox{in} \ (0,+\infty).
\end{cases}
\end{equation*} 
The proof is finished taking into account the Corollary \ref{Prop princ} with $f= \frac{d\varphi}{dt}(\tilde{u}_{2} - u_{1})$. \qed

\subsection*{Acknowledgments:} The authors are grateful to the anonymous reviewers for the constructive comments that improved this work.
%Capistrano–Filho was supported by CNPq grant 307808/2021-1 CAPES grants 88881.311964/2018-01 and 88881.520205/2020-01,  MATHAMSUD grant 21-MATH-03 and Propesqi (UFPE). Gallego was supported by MATHAMSUD grant 21-MATH-03 and the 100.000 Strong in the Americas Innovation Fund. de Sousa acknowledges support from CAPES-Brazil and CNPq-Brazil. 
This work was done while the first author was visiting Virginia Tech. The author thanks the host institution for their warm hospitality.

\end{document}